\numberwithin{equation}{section}
\theoremstyle{plain}
\newtheorem{theorem}[subsubsection]{Theorem}
\newtheorem{lemma}[subsubsection]{Lemma}
\newtheorem{prop}[subsubsection]{Proposition}
\newtheorem{conj}[subsubsection]{Conjecture}
\theoremstyle{definition}
\newtheorem{defn}[subsubsection]{Definition}
\newtheorem{remark}[subsubsection]{Remark}
\newtheorem{ex}[subsubsection]{Example}
\def\CC{\mathbb{C}}
\def\HH{\mathbb{H}}
\def\PP{\mathbb{P}}
\def\RR{\mathbb{R}}
\def\ZZ{\mathbb{Z}}
\newcommand\cD{\mathcal{D}}
\newcommand\cE{\mathcal{E}}
\newcommand\cF{\mathcal{F}}
\newcommand\cN{\mathcal{N}}
\newcommand\frA{\mathfrak{A}}
\newcommand\frB{\mathfrak{B}}
\newcommand\frD{\mathfrak{D}}
\newcommand\frN{\mathfrak{N}}
\newcommand\frP{\mathfrak{P}}
\newcommand\frV{\mathfrak{V}}
\newcommand\frX{\mathfrak{X}}
\newcommand\frY{\mathfrak{Y}}
\newcommand\frg{\mathfrak{g}}
\newcommand\frr{\mathfrak{r}}
\newcommand\frz{\mathfrak{z}}
\newcommand{\Bun}{\textup{Bun}}
\renewcommand{\Im}{\textup{Im}}
\newcommand{\Ind}{\textup{Ind}}
\newcommand\Loc{\textup{Loc}}
\newcommand{\Perf}{\textup{Perf}}
\newcommand\Rep{\textup{Rep}}
\newcommand\Aut{\textup{Aut}}
\newcommand\Hom{\textup{Hom}}
\newcommand\PGL{\textup{PGL}}
\newcommand\SL{\textup{SL}}
\newcommand{\wt}[1]{\widetilde{#1}}
\newcommand{\wh}[1]{\widehat{#1}}
\newcommand\quash[1]{}
\newcommand{\Sh}{\mathit{Sh}}
\newcommand{\Coh}{\textup{Coh}}
\newcommand{\beq}{\begin{equation}}
\newcommand{\eeq}{\end{equation}}
\newcommand{\ssupp}{\mathit{ss}}
\newcommand{\oo}{\infty}
\newcommand{\ol}{\overline}
\newcommand{\Fun}{\operatorname{Fun}}
\newcommand{\Flags}{\operatorname{Flags}}
\newcommand{\Subsets}{\operatorname{Subsets}}
\newcommand{\dgCat}{\operatorname{dgCat}}
\title{A microlocal criterion for commuting nearby cycles}
\author{David Nadler}
\address{Department of Mathematics\\University of California, Berkeley\\Berkeley, CA  94720-3840}
\email{nadler@math.berkeley.edu}
\dedicatory{}
\date{\today}
\keywords{}
\begin{document}


\maketitle

\begin{abstract}
We present a microlocal criterion 
for the equivalence of iterated nearby cycles along different flags of subspaces
in a higher-dimensional base. 
The result is motivated by its application to Hitchin systems in the context of Betti geometric Langlands~\cite{NYverlinde}.

\vspace{1em}

{\it I thank Claude Sabbah for bringing papers of Maisonobe \cite{M} and Kocherperger~\cite{Ko, Ko2} to my attention. Among many interesting results, they include assertions implying the main assertions of this paper. I will leave this paper on the arXiv as a possibly useful additional reference.}  
\end{abstract}

\tableofcontents

%
%
%
%
%
%
%

\section{Introduction}

The study of nearby cycles over higher-dimensional bases was pioneered by
Deligne, Gabber, Illusie, Laumon, Orgogozo
~\cite{ILO,  L, O}. 
We recommend the recent papers of Illusie~\cite{Ill} and Lu-Zheng~\cite{LZ}
 for an overview and some of the latest advances. We have also benefited from the user-friendly
 account in Illusie's notes~\cite{I}.
 
The current paper is closest in spirit and results to those of  L\^e~\cite{Le} and Sabbah~\cite{S} who work in an analytic setting with topological methods. 
We adopt this approach, and thus bypass  intricacies specific to the algebraic setting. 
Our main result provide a generalization of the result of L\^e~\cite{Le} to situations with microlocal bounds  but not necessarily stratifications.
Our specific arguments are closest to those appearing in joint work with Shende~\cite{NS}.
The analytic setting is also sufficient  for the motivating application to Hitchin systems found in \cite{NYverlinde} and  discussed below in \S\ref{s:apps}. 


Given the well-established theory of nearby cycles over curves, the  primary question
over higher-dimensional bases 
  is how nearby cycles vary along different curves in the base. 
The main result of this paper  provides a microlocal criterion, in terms of the singular support of Kashiwara-Schapira~\cite{KS}, for the equivalence of iterated nearby cycles along different flags of subspaces.
The intuition behind the criterion is simple if one views  sheaves as Lagrangian branes: a family of Lagrangian branes in a fixed symplectic manifold satisfies the criterion if its closure at the central limit parameter is again Lagrangian.

In the rest of the introduction, we first discuss a model situation of our main result and the notions that go into it. We then  sketch our motivating application \cite{NYverlinde} to  geometric Langlands~\cite{BD}, specifically towards an automorphic  ``Verlinde formula" in the Betti version of the theory~\cite{BN}.

\subsection{Main result}\label{s:intro main result} 

We will state our main result here in a model situation. (For a general formulation, see Theorem~\ref{thm: main}.) 
We work throughout in a ``tame"  setting such as subanalytic sets~\cite{BM}, or more general o-minimal structures~\cite{vdD, vdDM}.  

Consider a complex manifold $X$ equipped with two holomorphic functions $f_1, f_2$.  We will only be interested in $X$ in a neighborhood of the simultaneous zeros of $f_1, f_2$,  so 
will assume $f_1, f_2$ take values in the open unit disk $D = \{z\in \CC \, |\,  |z|^2 < 1 \}$.

Consider the product map
$$
\xymatrix{
f = f_1 \times f_2:X\ar[r] &  S =  D^2
}$$ 
 the open locus $X^\times  = f^{-1}( \{f_1, f_2  \not = 0\}$, and the special fiber
 $X_0 = \{f_1, f_2 = 0\}$.
 

Given  a weakly constructible complex of sheaves $\cF\in \Sh(X^\times)$, we construct (see Proposition~\ref{prop: lax}) a natural diagram of complexes 
on the special fiber
\beq\label{eq:intro bc}
\xymatrix{
 \psi_{f_1} \psi_{f_2}\cF & \ar[l] \Psi\cF \ar[r] & \psi_{f_2} \psi_{f_1}\cF
}
\eeq
Here $ \psi_{f_1}, \psi_{f_2}$ are the traditional nearby cycles functors for $f_1, f_2$, which we apply in iteration, and  $\Psi$ is an ``unbiased" nearby cycles functor involving only the geometry of the subset $X^\times \cup X_0 \subset X$ (see Definition~\ref{def:iterated nc}). Moreover, the objects of \eqref{eq:intro bc} come equipped with natural monodromy $\ZZ^2$-actions 
which the maps of \eqref{eq:intro bc} intertwine.

Our main result gives a sufficient condition for the maps of \eqref{eq:intro bc} to be equivalences,
and hence for the traditional nearby cycles $ \psi_{f_1}, \psi_{f_2}$ to ``commute". 
It applies immediately to the situation when $f:X\to S$ is a submersion. If this is not the case, we can always arrive at this case via the  graph construction:
we replace the map $f:X\to S$ with the projection $\pi: X \times S\to S$, consider the graph $\gamma:X \hookrightarrow X \times S$, $\gamma(x) = (x, f(x))$, 
and work with complexes supported on the graph $\Gamma_f = \gamma(X)$.

Recall that to a complex of sheaves $\cF\in \Sh(X)$, following Kashiwara-Schapira~\cite{KS}, one can assign its singular support $\ssupp(\cF) \subset T^*X$. The singular support $\ssupp(\cF)$ is closed and conic, and additionally, Lagrangian if and  only if $\cF$ is weakly constructible. In particular, the intersection of  $\ssupp(\cF)$  with the zero-section is the traditional support of $\cF$.

Here is our main result specialized to the given setup. We explain the hypotheses in the discussion immediately following.
  
\begin{theorem}\label{thm:intro}
Assume $f = f_1 \times f_2:X\to S$ is a submersion.

Let $\Lambda \subset T^*X^\times$ be a closed conic Lagrangian.

Suppose  $\Lambda$ is (i)  $f$-non-characteristic and (ii) $f$-Thom at the origin.

Let $\cF\in \Sh(X^\times)$ be a complex of sheaves with singular support $\ssupp(\cF) \subset T^* X^\times$ contained within $\Lambda$.

Then the natural maps are equivalences
$$
\xymatrix{
 \psi_{f_1} \psi_{f_2}\cF & \ar[l]_-\sim\Psi\cF \ar[r]^-\sim & \psi_{f_2} \psi_{f_1}\cF
}
$$
\end{theorem}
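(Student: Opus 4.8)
The plan is to reduce the statement to a one-variable nearby cycles computation, performed twice, by exploiting the microlocal hypotheses to control what happens when one function is turned on after the other. First I would recall the construction of the unbiased functor $\Psi$ and the comparison maps in \eqref{eq:intro bc}: $\Psi\cF$ is built from the inclusion $X^\times \cup X_0 \hookrightarrow X$ (or rather a suitable specialization/deformation to the normal cone of $X_0$), and by its very construction there are canonical maps from $\Psi\cF$ to each of $\psi_{f_1}\psi_{f_2}\cF$ and $\psi_{f_2}\psi_{f_1}\cF$ coming from the two ways of filtering the approach to $X_0$ through the intermediate loci $\{f_2 = 0, f_1 \neq 0\}$ and $\{f_1 = 0, f_2 \neq 0\}$. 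So it suffices to prove that, say, the map $\Psi\cF \to \psi_{f_1}\psi_{f_2}\cF$ is an equivalence; the other case is symmetric.

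The key step is to understand $\psi_{f_2}\cF$ as an object on $X_1 := \{f_2 = 0\}$ together with its singular support, and then to run $\psi_{f_1}$ on it. Hypothesis (i), that $\Lambda$ is $f$-non-characteristic, ensures that the restrictions and nearby cycles along each factor behave as expected — in particular $\psi_{f_2}\cF$ is still (weakly) constructible with singular support controlled by a Lagrangian $\Lambda_1 \subset T^*X_1$ obtained from $\Lambda$ by the standard microlocal estimate for nearby cycles (the singular support of $\psi_g \cG$ is bounded by the image of $\ssupp(\cG)$ under the correspondence attached to $g$, cf.\ Kashiwara–Schapira). The heart of the matter is hypothesis (ii), the $f$-Thom condition at the origin: this is exactly what guarantees that when we now apply $\psi_{f_1}$ to $\psi_{f_2}\cF$, the result does not ``see'' any extra contributions concentrated over $X_0$ that were absent in $\Psi\cF$. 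Concretely, the Thom condition should be shown to imply that the closure of $\Lambda$ (or of its images under the partial nearby cycles) in $T^*X$ remains Lagrangian and, crucially, does not meet the conormal directions to $X_0$ transversally to the $f_1$-direction in a bad way — this is the ``closure at the central limit is again Lagrangian'' slogan from the introduction. I would make this precise by translating the Thom condition into a statement that the relevant microlocal fibers stabilize, so that the nearby cycles along $f_1$ of $\psi_{f_2}\cF$ near $X_0$ agree with the nearby cycles along $f_1$ of $\cF|_{\{f_1 \neq 0\}}$ computed ``all at once'', which is precisely $\Psi\cF$.

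Assembling: I would first treat the case where $\cF$ is supported away from $X_0$ in one of the factors (so one of the nearby cycles is trivial and the comparison is a formal consequence of the definitions), then handle the general case by a microlocal Morse-theoretic / non-characteristic deformation argument — deforming the pair of real hypersurfaces $\{|f_1| = \epsilon\}$, $\{|f_2| = \epsilon'\}$ and using (i)+(ii) to check that no critical values of the relevant distance functions are crossed, so that the cohomology of the relevant nearby fibers is locally constant in the deformation parameters. The compatibility with the $\ZZ^2$-monodromy actions is then automatic, since the monodromies are induced by loops in the parameter space and the equivalences are constructed functorially in $f_1, f_2$. I expect the main obstacle to be step two: extracting from the $f$-Thom condition a clean microlocal statement about $\ssupp(\psi_{f_2}\cF)$ near $X_0$ that is strong enough to rule out the ``jumping'' phenomena which make iterated nearby cycles fail to commute in general — essentially, showing that the Thom condition is the microlocal incarnation of the geometric stability needed for the L\^e-type argument to go through without a stratification hypothesis.
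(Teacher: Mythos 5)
Your identification of the shape of the argument is mostly on target: the maps from $\Psi\cF$ to the two iterated nearby cycles do come from base change along the two ways of filtering the approach to $X_0$, hypothesis (i) is used to control codirections along the map $f$, hypothesis (ii) is used to control limits in the relative cotangent bundle, and your final suggestion of a deformation/propagation argument on families of real level sets $\{|f_i|=\epsilon\}$ is in fact the mechanism the paper uses. But the middle step you propose --- bounding $\ssupp(\psi_{f_2}\cF)$ by a Lagrangian $\Lambda_1\subset T^*X_1$ and then arguing that applying $\psi_{f_1}$ to $\psi_{f_2}\cF$ ``sees no extra contributions'' --- is not what the paper does, and it has a circularity hazard you should watch. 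The Kashiwara--Schapira estimate for $\ssupp(\psi_{f_2}\cF)$ involves the closure of a projection of $\Lambda$, and to get a bound sharp enough to run a one-variable Thom-type argument for $\psi_{f_1}$ on $X_1$, you would essentially need the singular-support preservation statement of Theorem~\ref{thm: main univ} --- which in the paper is a \emph{consequence} of the main theorem, not an input. You correctly flag this as ``the main obstacle'' but do not resolve it.

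The paper sidesteps this entirely: it never estimates the singular support of an intermediate nearby cycle. After separating the monodromy (the unwinding functor $u_n$ preserves singular support since $p_n$ is a covering), the comparison $r^{a^1_\bullet}_{a^2_\bullet}:\nu(a^1_\bullet)\to\nu(a^2_\bullet)$ is checked on stalks, reducing locally to the projection $f:\CC^m\times D^n\to D^n$. Each iterated naive nearby cycle is identified with sections $\Gamma(B(\frz)\times P(a_\bullet,\frr),\cF)$ over a product of a ball in $\CC^m$ with a face $P(a_\bullet,\frr)$ of a fixed compact polytope-like region $P(\frr)\subset(D^\times)^n$, so that \emph{all} iterated nearby cycles for \emph{all} flags live on the faces of one object. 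The comparison map is then literally restriction between faces, and a single non-characteristic propagation argument --- interpolating monotonically between faces and using (i) to confine obstructions to the vertical boundary $S(\frz)\times P_t$, then (ii) to rule them out in the limit $\frr,\frz\to 0$ by producing a non-isotropic curve in $\ol\Lambda_{f}|_{z=0}$ --- finishes the proof, using only the original $\Lambda\subset T^*X^\times$ throughout. This is substantially more elementary than iterating singular-support estimates; it is also why the paper can avoid the stratified machinery (Thom's second isotopy lemma) alluded to in Remark~\ref{rem:gv}. Finally, a minor point: your suggested base case ``$\cF$ supported away from $X_0$ in one of the factors'' is vacuous as stated, since $\cF$ lives on $X^\times$ to begin with; you likely meant support conditions relative to $X_1$ or $X_2$, but the paper's direct computation makes no such case split necessary.
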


\begin{remark}
Theorem~\ref{thm:intro} is a particular case of the compatibility of $n$-fold iterated nearby cycles  proved
 in Theorem~\ref{thm: main}.
 To briefly convey its content, for a map of complex manifolds $f = f_1 \times \cdots \times f_n:X\to D^n$,  set  $X^\times = f^{-1}((D^\times)^n)$, $X_0 = f^{-1}(0)$. Given 
  a sheaf $\cF\in \Sh(X^\times)$
satisfying hypotheses as in Theorem~\ref{thm:intro}, and a permutation $\sigma \in \Sigma_n$, 
 we construct canonical equivalences 
 $$
\xymatrix{
 \psi_{f_1} \psi_{f_2}\cdots \psi_{f_n}\cF  \simeq \psi_{f_{\sigma(1)}} \psi_{f_{\sigma(2)}}\cdots \psi_{f_{\sigma(n)}}\cF
}
$$
More precisely, inside of the $\oo$-category of functors $\Fun( \Sh(X^\times), \Sh(X_0))$, we construct a contractible $\oo$-groupoid whose objects include the  possible iterations
$\psi_{f_{\sigma(1)}} \psi_{f_{\sigma(2)}}\cdots \psi_{f_{\sigma(n)}}\cF$, for $\sigma\in \Sigma_n$.   
\end{remark}

To explain the hypotheses of Theorem~\ref{thm:intro}, let us briefly  recall some microlocal constructions.
 To a holomorphic map of complex manifolds  $f:X\to S$, we have the Lagrangian  correspondence of cotangent bundles
$$
\xymatrix{
T^*S & \ar[l]_-p f^*(T^*S)  \ar[r]^-{df^*} & T^*X
}
$$
where $f^*(T^*S) = T^*S \times_{S} X$ is the pullback bundle, 
 $p$ is the evident projection, and $df^*$ is the pullback of covectors. If $f$ is a submersion, then $df^*$ is injective, and we have a short exact exact sequence of vector bundles
\beq\label{eq:ses}
\xymatrix{
0 \ar[r] & f^*(T^*S)  \ar[r]^-{df^*} & T^*X \ar[r]^-\Pi  & T^*_f \ar[r] & 0
}
\eeq
where $T^*_f$ is the relative cotangent bundle. Note for any $s\in S$, with fiber $X_s = f^{-1}(s)$, we have a canonical identification $T^*_f|_{X_s}  \simeq T^*X_s$. 
%

The  hypotheses (i) and (ii)  of Theorem~\ref{thm:intro} will place a constraint on the interaction
of  the subset $\Lambda \subset T^*X$ with  the kernel and cokernel of the short exact sequence~\eqref{eq:ses} respectively.

\begin{defn} Let $\Lambda \subset T^*X$ be a subset, and 
 $f:X\to S$ a submersion.

We say that $\Lambda$ is {\em $f$-non-characteristic} if the intersection 
$$ \Lambda \cap df^*(f^*(T^*S))
$$ lies in the zero-section of $T^*X$.

\end{defn}

\begin{remark}
For $Y \subset X$ a submanifold, the conormal bundle $T^*_Y X \subset T^*X$ is $f$-non-characteristic  if and only if 
the restriction $f|_Y:Y \to S$ is a submersion.
\end{remark}


Given  a submersion $f:X\to S$, and a  subset $\Lambda \subset T^*X$, we refer to its image $\Lambda_f = \Pi(\Lambda) \subset T^*_f$ in the relative cotangent bundle as  the {\em $f$-projection} of $\Lambda$. 
 We will be interested in the closure
of the $f$-projection
$$
 \ol\Lambda_f \subset T^*_f
 $$  
 in particular
 its intersection with   fibers $X_s$, for $s\in S$, which we denote by
 $$
\xymatrix{
 \ol \Lambda_{f, s} = \ol\Lambda_f \times_S {X_s} \subset T^*_f|_{X_s} \simeq T^* X_s
 }
 $$

\begin{remark}
 Note $ \ol\Lambda_f = \ol{\Pi(\Lambda)}$ contains $\Pi(\ol \Lambda)$, but in general they are not equal. For example, take 
 the projection 
 $f:X = \CC^2\to \CC = S$, $f(x, y) = x$, the punctured parabola $Y = \{ x \not = 0, x = y^2\}$, and its conormal bundle $\Lambda = T^*_Y X \subset T^*X$. Then $\ol \Lambda_{f, 0}$ is the cotangent fiber $T^*_0 X_0$, while  $\Pi(\ol \Lambda)|_{X_0}$ consists of only the zero covector in $T^*_0 X_0$.
 \end{remark}

\begin{defn}\label{def:intro thom} Let $\Lambda \subset T^*X$ be a conic Lagrangian, and
 $f:X\to S$ a submersion.

We say that $\Lambda$ is {\em $f$-Thom} at a point $s\in S$ if the fiber $\ol\Lambda_{f, s}  \subset T^*X_s$ of the closure of the $f$-projection of $\Lambda$ is isotropic.

\end{defn}

\begin{remark}
We explain in Proposition~\ref{p:thom=thom}  how the Thom condition 
for Lagrangians generalizes the Thom $A_f$ condition for stratified maps. 
\end{remark}

\begin{remark}
When the base $S$ is a curve,  any  conic Lagrangian $\Lambda \subset T^*X$ is $f$-Thom at each point of $S$. See Proposition~\ref{p:dim 1}.
\end{remark}

%
%
Before turning to a motivating application in the next section, let us illustrate the content of the theorem with the following canonical non-example.

\begin{ex}
Consider the projection $f:X = \CC\PP^1 \times \CC^2 \to \CC^2 = S$.

 Let $Y \subset X$ be the blow up locus of pairs $\ell \in \CC\PP^1$, $v\in \CC^2$ with $v\in \ell$. 
 
 Let $k_Y \in \Sh( X)$ be the constant sheaf on $Y$. Then $\psi_1 \psi_2 k_Y  \simeq k_{[1, 0]}$,
 $\psi_2 \psi_1 k_Y  \simeq k_{[0, 1]}$ are skyscraper sheaves at the respective points $[1,0]$, $[0,1]$ of the special fiber $X_0 = \CC\PP^1$, so not isomorphic.
 
 For $\Lambda = T^*_Y X$ the conormal bundle, one finds $\ol \Lambda_{f, 0} = T^*X_0$ is the entire cotangent bundle of the special fiber, so not isotropic.

\end{ex}


\subsection{Application}\label{s:apps} We sketch here the motivating application of this paper found in joint work with Zhiwei Yun~\cite{NYverlinde}. It is part of a broad undertaking, developed also with David Ben-Zvi~\cite{BN}, to
understand the Betti version of Geometric Langlands. The reader familiar with the subject could skip to the highlighted assertions at the end of the section. 

Fix a complex reductive group $G$, with Langlands dual $G^\vee$, and a smooth complex projective curve $C$. 

In Betti Geometric Langlands, one studies the moduli $\Bun_G(C)$ of $G$-bundles on $C$, and in particular the Hitchin system on its cotangent bundle $T^*\Bun_G(C)$.  The special fiber of the Hitchen sysem is the global nilpotent cone $\cN \subset  T^*\Bun_G(C)$
of Higgs bundle $(\cE, \varphi)$ whose Higgs field $\varphi$ is everywhere nilpotent. 

The overarching challenge is to understand the automorphic category $\Sh_\cN( \Bun_G(C))$ of complexes of sheaves on $\Bun_G(C)$ with singular support in $\cN$. At each closed point $c\in C$, 
spherical Hecke operators act on  $\Sh_\cN( \Bun_G(C))$, and thus give an action of 
the tensor category $\Rep(G^\vee)$ via the geometric Satake correspondence~\cite{MV}.
One can integrate these local actions to obtain a global action of the tensor category $\Perf(\Loc_{G^\vee}(C))$ of perfect complexes on the moduli $\Loc_{G^\vee}(C)$ of $G^\vee$-local systems on $C$~\cite{NY}. 

In its most basic form, the Betti Geometric Langlands conjecture asserts an equivalence of $\Perf(\Loc_{G^\vee}(C))$-module categories
$$
\xymatrix{
\Ind\Coh_\cN(\Loc_{G^\vee}(C)) \ar[r]^-{\sim ?}  & \Sh_\cN( \Bun_G(C))
}
$$
where $\Ind\Coh_\cN(\Loc_{G^\vee}(C))$ denotes ind-coherent sheaves on $\Loc_{G^\vee}(C)$ with nilpotent singular support~\cite{AG}. 

Among many possible variations and generalizations, it is worth highlighting that one may consider parahoric level-structure at a finite subset of marked points $S\subset C$. In particular, one may take Iwahori-level structure, and study the automorphic moduli $\Bun_G(C, S)$ of $G$-bundles on $C$ with a $B$-reduction along $S$, along with  the corresponding spectral moduli $\Loc_{G^\vee}(C, S)$ of $G^\vee$-local systems on $C\setminus S$ with a  $B^\vee$-reduction near $S$.
 With this setup, the Betti Geometric Langlands conjecture asserts an equivalence of $\Perf(\Loc_{G^\vee}(C, S))$-module categories
$$
\xymatrix{
\Ind\Coh_\cN(\Loc_{G^\vee}(C, S)) \ar[r]^-{\sim ?}  & \Sh_\cN( \Bun_G(C, S))
}
$$

\begin{remark}
Note that the spectral category $\Ind\Coh_\cN(\Loc_{G^\vee}(C, S))$ only depends on the pair $S\subset C$ through their  topology not the complex structure on $C$. Thus implicit in the above conjecture is the subsidiary conjecture that the automorphic category  $\Sh_\cN( \Bun_G(C, S))$ is likewise a topological invariant of the pair $S\subset C$. 
In the informal discussion to follow, we will proceed assuming this, and only mention here that it is known in some concrete but nontrivial situations such as for curves of genus one~\cite{LN}.
\end{remark}

In addition to its topological invariance, the spectral category $\Ind\Coh_\cN(\Loc_{G^\vee}(C, S))$  enjoys many structures from topological field theory, notably a categorical  analogue of the Verlinde formula.
Namely, if one introduces a pair-of-pants decomposition of $C$, one can recover $\Ind\Coh_\cN(\Loc_{G^\vee}(C, S))$ from the spectral categories $\Ind\Coh_\cN(\Loc_{G^\vee}(C_i , S_i))$ of the pairs $S_i \subset C_i$ arising in the pair-of-pants decomposition~\cite{BNglue}. In particular, one can recover all spectral invariants from the case of genus one curves with at most three marked points.

It is here  that the results of this paper enter the story, specifically
when we seek a parallel   Verlinde formula for the automorphic category
$ \Sh_\cN( \Bun_G(C, S))$.
 Unfortunately, since the moduli $ \Bun_G(C, S)$ depends essentially on the complex structure on $C$, 
it is not possible to directly apply a pair-of-pants decomposition of $C$. Instead, one may consider a degeneration  to a nodal curve $C\rightsquigarrow C_0$, with marked points $S \rightsquigarrow S_0$ disjoint from the nodes, and the corresponding degeneration of moduli of bundles
\beq\label{eq:degen}
\xymatrix{
 \Bun_G(C, S) \ar@{~>}[r]  & \Bun_G(C_0, S_0)
}
\eeq

To say more specifically what we mean by the moduli $ \Bun_G(C_0, S_0)$ for the nodal curve,   let us briefly introduce some further notation.  Write  $R \subset C_0$ for the set of nodes, introduce  the normalization $p:\tilde C_0\to C_0$, and consider the pre-images
$\wt S_0 = p^{-1}(S_0) $, $R_+ \coprod R_- = p^{-1}(R)$ as finite subsets of $\tilde C_0$.  Here we have a canonical bijection $R_+ \simeq R_-$ identifying pairs of points projecting under $p$ to the same node. 
By definition
$ \Bun_G(C_0, S_0)$  classifies a $G$-bundle on $\tilde C_0$, a $B$-reduction along  $\tilde S_0 \coprod R_+ \coprod R_-$,  and an isomorphism of the induced $H$-bundles over $R_+, R_-$ covering the bijection $R_+ \simeq R_-$.

For more discussion on the structure of the  degeneration~\eqref{eq:degen}, we recommend~\cite{faltings, faltings verlinde, solis} where it is realized as a global version of the Vinberg semigroup for the loop group.

Now to relate the automorphic categories under the degeneration~\eqref{eq:degen}, one may take nearby cycles
\beq\label{eq:nearbybun}
\xymatrix{
 \psi:\Sh_\cN(\Bun_G(C, S)) \ar[r]  & \Sh_\cN(\Bun_G(C_0, S_0))
}
\eeq
and attempt to identify the ``image" of $\psi$. Following the gluing paradigm for spectral categories~\cite{BNglue}, one arrives at the following conjecture for automorphic categories. Its resolution would provide the sought-after automorphic Verlinde formula.

\begin{conj}\label{conj:intro}
The left adjoint $\psi^L$ to the nearby cycles \eqref{eq:nearbybun} is the universal functor  co-equalizing the pair of  affine Hecke actions 
on $\Sh_\cN(\Bun_G(C_0, S_0))$ at each node of $C_0$.

\end{conj}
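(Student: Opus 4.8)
The plan is to deduce Conjecture~\ref{conj:intro} from Theorem~\ref{thm: main}, by reducing the global degeneration to a product of local degenerations at the nodes, computing the single‑node nearby cycles functor, and running a (co)monadicity argument for the universal property.

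\emph{Step 1 (localization at the nodes).} Write $R = \{r_1,\dots,r_n\}\subset C_0$ for the set of nodes. The degeneration \eqref{eq:degen} is the special fiber of a family over a polydisk $D^n$ whose $i$‑th coordinate $t_i$ smooths $r_i$; over $(D^\times)^n$ all smooth fibers share the topology of $(C,S)$, so by topological invariance a sheaf in $\Sh_\cN(\Bun_G(C,S))$ propagates canonically over the smooth locus. Analytically near the locus of nilpotent Higgs fields, the total space is a product over $R$ of the local Vinberg‑semigroup (Beilinson--Drinfeld) degenerations of~\cite{faltings, faltings verlinde, solis}, so that \eqref{eq:nearbybun} is the iterated nearby cycles functor for $f = t_1\times\cdots\times t_n$ on this total space. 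The first task is to make this factorization precise and to produce a closed conic Lagrangian $\Lambda$ in the cotangent bundle of the total space, built from the relative global nilpotent cone, that bounds the singular support of the propagated sheaves; along the way one checks that $\psi$ preserves the condition $\cN$ in the limit.

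\emph{Step 2 (commuting the nodes).} Apply Theorem~\ref{thm: main} to $f$ and $\Lambda$: this needs $\Lambda$ to be (i) $f$‑non‑characteristic and (ii) $f$‑Thom at the origin. Condition (i) says no nonzero covector of the relative nilpotent cone is pulled back from $D^n$ — transversality of the relative Hitchin map to the node‑smoothing directions — and should follow from properness of the Hitchin map together with nilpotency. Condition (ii), that $\ol\Lambda_{f,0}$ is isotropic in the special fiber, carries the real content: the base is $n$‑dimensional so Proposition~\ref{p:dim 1} does not apply, and one reduces it via Proposition~\ref{p:thom=thom} to the Thom $A_f$ condition for the natural stratification of the Vinberg degeneration, which is available in the loop‑group setting from~\cite{faltings, solis}. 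The conclusion is that $\psi$ is independent of the order of the nodes, hence may be analyzed one node at a time.

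\emph{Step 3 (the single‑node computation).} It then suffices to treat $n=1$. Near one smoothed node, modifying the $G$‑bundle along the two branches $r_+, r_-$ of the normalization gives the two commuting actions of the affine Hecke category on $\Sh_\cN(\Bun_G(C_0,S_0))$, and the total space is a global avatar of the basic degeneration $\{xy = t\}$. The heart of the matter is to show that $\psi^L$ exhibits $\Sh_\cN(\Bun_G(C,S))$ as the geometric realization of the bar construction on these two actions — equivalently, that $\psi^L$ is the $\oo$‑categorical quotient of $\Sh_\cN(\Bun_G(C_0,S_0))$ by the diagonal affine Hecke action, with $\psi$ the inclusion of its coinvariants. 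This is the main obstacle: it is a genuine local loop‑group computation, parallel to the description of the gluing of Beilinson--Drinfeld Grassmannians and to the realization of the affine Hecke category as a categorical trace, and it is not formal from the present paper.

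\emph{Step 4 (assembling and universality).} Combining Steps 2 and 3 and iterating over the nodes of $C_0$, one gets that $\psi^L$ co‑equalizes the two affine Hecke actions at each node, immediately from the single‑node statement and the compatibility of Hecke modifications at distinct nodes. Universality — that any functor out of $\Sh_\cN(\Bun_G(C_0,S_0))$ co‑equalizing all the node actions factors uniquely through $\psi^L$ — then follows from a Barr--Beck‑type argument: $\psi^L$ and $\psi$ identify $\Sh_\cN(\Bun_G(C,S))$ with the relevant category of (co)modules, which is by construction the asserted co‑equalizer. Beyond Step 3, the remaining delicate point is the $f$‑Thom verification in Step 2 — controlling the closure of the relative characteristic variety along the node‑smoothing directions — which is exactly the hypothesis Theorem~\ref{thm: main} is built to exploit, but which must still be checked by hand for this particular family.
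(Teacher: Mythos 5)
This statement is a \emph{conjecture} in the paper, not a theorem: the paper does not offer a proof of it, and in fact explicitly disclaims the universality clause. The surrounding discussion says that the main result of the paper (Theorem~\ref{thm: main}) ``allows us to deduce the co-equalization (but not the universality) asserted in Conjecture~\ref{conj:intro}'', and the deduced statement is then recorded separately as Theorem~\ref{thm:ny}, attributed to~\cite{NYverlinde}. So you are attempting to prove something the paper itself leaves open.

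Your outline for the co-equalization part is broadly consistent with what the paper gestures at — factor the degeneration through a polydisk over the nodes, apply Theorem~\ref{thm: main} to commute the iterated nearby cycles, and interpret each single-node nearby cycle via Hecke modifications at the two branches. You correctly flag in Step~3 that the single-node computation is a genuine local loop-group statement ``not formal from the present paper'', and that the $f$-Thom verification in Step~2 must be checked by hand for this family; these are real gaps, and the paper also treats them as external input for Theorem~\ref{thm:ny}.

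The decisive problem is Step~4. You assert universality ``follows from a Barr--Beck-type argument'' that would identify $\Sh_\cN(\Bun_G(C,S))$ with a category of (co)modules and hence with the co-equalizer, but no such argument is supplied: you do not exhibit the (co)monad, verify conservativity of $\psi$ or $\psi^L$, or check preservation of the relevant (co)limits, and none of this is available from Theorem~\ref{thm: main}, which only supplies invertibility of comparison maps between iterated nearby cycles. This is exactly the part of the statement the paper cannot prove and deliberately excludes from Theorem~\ref{thm:ny}. As written, your proposal proves at most the co-equalization half (modulo the acknowledged Step~3 input), not the universal property, and so does not establish Conjecture~\ref{conj:intro}.
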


Via Radon transforms, 
one can identify the affine Hecke actions of the conjecture with adjoints to nearby cycles in further degenerations to nodal curves with chains of projective lines.
The main result of the current paper provides the requisite compatibilities between these nearby cycles. It thus allows us to deduce the co-equalization (but not the universality) asserted in Conjecture~\ref{conj:intro}.

\begin{theorem}[\cite{NYverlinde}]\label{thm:ny}
The left adjoint $\psi^L$ to the nearby cycles \eqref{eq:nearbybun} co-equalizes the  pair of affine Hecke actions 
on $\Sh_\cN(\Bun_G(C_0, S_0))$ at each node of $C_0$.

\end{theorem}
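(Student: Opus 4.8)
The plan is to deduce Theorem~\ref{thm:ny} from the main result of this paper (Theorem~\ref{thm:intro}, or more precisely its $n$-fold version Theorem~\ref{thm: main}) by unwinding the affine Hecke actions at the nodes into iterated nearby cycles along suitable coordinate hyperplanes, so that the desired co-equalization becomes an instance of the commutation of nearby cycles. First I would recall, following \cite{NYverlinde} and the Vinberg-semigroup picture of \cite{faltings, faltings verlinde, solis}, the local model near a single node: the degeneration \eqref{eq:degen} near a node is governed by a one-parameter family whose total space carries an extra $\Gm$ (or, after passing to chains of $\PP^1$'s, a torus) of symmetry, and the affine Hecke action at that node is realized by nearby cycles in a further degeneration in which the node is resolved by inserting a chain of projective lines. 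Concretely, for a node we obtain a map $f = f_1 \times f_2 : X \to D^2$ whose two coordinates $f_1, f_2$ record the two smoothing parameters on the two branches meeting at the node, with $\psi_{f_1}$ implementing the original nearby cycles \eqref{eq:nearbybun} and $\psi_{f_2}$ (via Radon transform, as indicated in the text) implementing the affine Hecke action, and vice versa for the reversed order.

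Next I would verify the hypotheses of Theorem~\ref{thm:intro} in this geometric situation. The relevant sheaves on $\Bun_G$ have singular support in the global nilpotent cone $\cN$, which is a closed conic Lagrangian; the content is that $\cN$, pulled back to the total space of the degeneration, is $f$-non-characteristic and $f$-Thom at the origin. For the non-characteristic condition one uses that the Hitchin map is proper and that the smoothing directions $f_1, f_2$ are "transverse" to the nilpotent cone in the precise sense that no nonzero nilpotent Higgs field is conormal to a smoothing deformation — this is where the geometry of the Vinberg degeneration (the Higgs field degenerating compatibly with the node) is used. For the Thom condition one invokes Proposition~\ref{p:dim 1} and Proposition~\ref{p:thom=thom}: since after reduction to a chain of $\PP^1$'s each intermediate degeneration is over a one-dimensional base, the Thom condition is automatic there, and one propagates this along the chain. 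I would organize this as: (a) reduce to a single node by working one node at a time, the actions at distinct nodes being manifestly independent; (b) reduce, via the Radon transform and the chain-of-$\PP^1$'s degeneration of \cite{NYverlinde}, to a sequence of nearby-cycles operations along the coordinate functions of a single map to a polydisk; (c) check non-characteristic and Thom for $\cN$ in that model.

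With these in place, Theorem~\ref{thm: main} supplies a canonical equivalence $\psi_{f_1}\psi_{f_2}\cF \simeq \psi_{f_2}\psi_{f_1}\cF$ for $\cF$ with singular support in $\cN$, compatibly with the monodromy $\ZZ^2$-actions. Passing to left adjoints and translating back through the Radon transforms, the two composites "(original nearby cycles)$\circ$(Hecke action)" and "(Hecke action)$\circ$(original nearby cycles)" become identified; this is exactly the statement that $\psi^L$ co-equalizes the two affine Hecke actions at the node. I would close by remarking, as the text already does, that this argument yields only the co-equalization and not the universality, since Theorem~\ref{thm: main} identifies the two iterations but says nothing about $\psi^L$ being initial among functors with this property. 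The main obstacle I anticipate is step (b)–(c): producing the degeneration to chains of projective lines in a form where the map to the polydisk is (after the graph construction) a submersion and the pushed-forward nilpotent cone visibly satisfies the $f$-non-characteristic condition — i.e.\ genuinely identifying the affine Hecke operator with a nearby-cycles functor to which Theorem~\ref{thm:intro} applies, rather than merely a functor built from nearby cycles. The Thom condition, by contrast, should be cheap here because all the intervening bases are curves.
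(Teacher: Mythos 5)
This theorem is attributed to \cite{NYverlinde} and is \emph{not proved in the present paper}: the text before it only sketches the strategy (``Via Radon transforms, one can identify the affine Hecke actions \ldots\ with adjoints to nearby cycles in further degenerations to nodal curves with chains of projective lines. The main result of the current paper provides the requisite compatibilities\ldots''). Your proposal follows that sketch faithfully in its overall shape: unwind Hecke actions via Radon transforms into iterated nearby cycles, apply Theorem~\ref{thm: main} to commute them, pass to left adjoints, and observe that this yields co-equalization but not universality. That matches what the paper indicates.

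However, there is a genuine flaw in your treatment of the Thom condition. You propose to invoke Proposition~\ref{p:dim 1} on the grounds that ``after reduction to a chain of $\PP^1$'s each intermediate degeneration is over a one-dimensional base,'' and then ``propagate along the chain.'' But Proposition~\ref{p:dim 1} applies only when $\dim Y = 1$; the $f$-Thom condition of Theorem~\ref{thm: main} is imposed on the full map $f\colon X \to D^n$ with $n \geq 2$, and it concerns the closure of the projection into the \emph{joint} relative cotangent bundle $T^*_f$. Being $f_i$-Thom for each individual coordinate projection $f_i\colon X \to D$ does not imply being $f$-Thom for the product map, so ``propagating along the chain'' is exactly the step that is missing. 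The tool the paper actually flags for this purpose is the moment-map example in~\S\ref{sss:thom} (attributed to Finkelberg): if a group-scheme $g\colon G\to Y$ acts fiberwise with discretely many orbits and $\Lambda_f \subset \mu^{-1}(0)$, then $\ol\Lambda_{f,y}$ sits inside the union of orbit conormals and is automatically Lagrangian, hence $\Lambda$ is $f$-Thom. In the $\Bun_G$ setting the global nilpotent cone is by construction the zero locus of such a (gauge-theoretic) moment map, which is why the condition holds and is ``cheap,'' but not for the dimension-one reason you give. Alternatively one could run the argument via the singular-support compatibilities of Theorem~\ref{thm: main univ}, where $\ol{\Lambda(a)}_{g(a',a)}|_{X^\times(a')} \subset \Lambda(a')$ yields the Thom condition via Remark~\ref{rem:unclosed}; this again is a hypothesis one must verify directly, not something Proposition~\ref{p:dim 1} gives for free.

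Your instinct that step (b)--(c) is the real work is correct, but the non-characteristic condition and the identification of Hecke operators with honest nearby-cycles functors are not the only pressure points: you should replace the Proposition~\ref{p:dim 1} argument with the moment-map verification (or the $\Lambda(a)$-compatibility hypothesis of Theorem~\ref{thm: main univ}) for the Thom condition as well.
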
 

As a consequence of the theorem and the  gluing of spectral categories, we obtain 
 that any map of Hecke module categories 
\beq\label{eq:given}
\xymatrix{
\Ind\Coh_\cN(\Loc_{G^\vee}(C_0, S_0)) \ar[r]  & \Sh_\cN( \Bun_G(C_0, S_0))
}
\eeq
induces a map of Hecke module categories 
\beq\label{eq:obtained}
\xymatrix{
\Ind\Coh_\cN(\Loc_{G^\vee}(C, S)) \ar[r]  & \Sh_\cN( \Bun_G(C, S))
}
\eeq
Moreover, one can check the construction enjoys many essential properties, notably, if \eqref{eq:given} is compatible with parabolic induction, then so is \eqref{eq:obtained}. (In contrast, if one constructs a functor of the form  \eqref{eq:obtained} via the spectral action of
$\Perf(\Loc_{G^\vee}(C))$, as for example done in~\cite{NY3pts}, it is difficult to establish its compatibility with parabolic induction.)

Of course, of particular interest are situations when we can take \eqref{eq:given} to be a known equivalence and then  check \eqref{eq:obtained} is also an equivalence. Let us mention two such situations to appear in future work.

In one direction,  we can take $C = E$ an elliptic curve with no marked points, and $C_0 = \PP^1/\{0\sim \oo\}$ a nodal cubic. Then via Radon transforms, one can identify $ \Sh_\cN( \Bun_G(C_0, S_0))$ with the regular bimodule for the affine Hecke category.
Thus Theorem~\ref{thm:ny} provides a  map from the  cocenter of the affine Hecke category to the automorphic Betti Geometric Langlands in genus one. 
Following Bezrukavnikov's local tamely ramified 
geometric Langlands equivalence~\cite{B}, one can construct an equivalence  \eqref{eq:given}. We expect it is then possible to confirm~\eqref{eq:obtained} is likewise an equivalence, thus establishing the 
Betti Geometric Langlands equivalence in genus one.

In another direction, we can take $C = \PP^1$ genus zero with four marked points, and  $C_0 = \PP^1 \vee \PP^1$ a nodal pair of genus zero curves each with two additional marked points. Following~\cite{NY3pts}, when $G$ is rank one, one can construct an equivalence~\eqref{eq:given}. We expect it is then possible to confirm~\eqref{eq:obtained}   is 
likewise an equivalence, thus establishing the  Betti Geometric Langlands equivalence in the case of rank one groups over $\PP^1$ with four marked points.

%
%
%
%
%
%

\subsection{Acknowledgements}
This paper grew out of discussions with Zhiwei Yun devoted to the applications described in Sect.~\ref{s:apps}.
His generous interest and comments were pivotal to its development. I am  grateful to David Ben-Zvi and Vivek Shende for many  discussions on related topics.  I am also grateful to Misha Grinberg and Kari Vilonen for their insightful comments recorded in Remark~\ref{rem:gv}.

This work was supported by NSF grant DMS-1802373. It was also supported by 
NSF grant DMS-1440140 while the author was  in residence at MSRI during the Spring 2020 semester.

\section{Preliminaries}

\subsection{Tame geometry}

We work throughout in the ``tame" setting of subanalytic sets~\cite{BM} or more general o-minimal structures~\cite{vdD, vdDM}.
We list here the key  properties we will use.

All manifolds are assumed to be real analytic.  All subsets are assumed to be subanalytic, or more generally definable in an o-minimal structure. All maps are assumed to have graphs that are such subsets.

\subsubsection{Whitney stratifications}

Let $M, N$ be manifolds.

Let $X \subset M$ be a closed subset, and $p>0$ a positive integer.

For a locally finite collection of subsets $\frA$ of $X$ there is a $C^p$ Whitney stratification $\frX$ of $X$
such that each $A\in \frA$ is a union of strata. In this situation,
one says  that $\frX$ is compatible with $\frA$.

For a proper map $f: X \to N$, and locally finite collections of subsets $\frA$ of $M$ and $\frB$ of $N$, 
there are $C^p$ Whitney stratifications $\frX$ of $X$ and  $\frN$ of $N$  compatible  with $\frA$ and $\frB$
 such that 
 for each  
 $S\in \frX$ 
 we have $f(S)\in \frN$, 
 and  the restriction 
 $f|_S:S \to f(S)$ is a $C^p$ submersion. 
In this situation, one says  $\frX$, $\frN$ are compatible with  $f$.
 
%

\subsubsection{Curve selection}
Let $M$ be a manifold.

For a subset $A \subset M$, and a point $x\in \ol A \setminus A$, there exists a map $\gamma:[0, 1) \to M$ such that
$\gamma(0) = x$, and $\gamma(0, 1) \subset A$.

\subsubsection{Transversality}

\begin{lemma}\label{l:trans}
Let $M$ be a manifold, and $f: M\to \RR$ a non-constant function.

Let $\frX$ be a finite collection of relatively compact submanifolds of $M$.

There exists $\epsilon>0$ such that over the open $f^{-1}(0, \epsilon) \subset M$,  the restriction $df|_{X}$ to each $X\in \frX$ is non-zero.
\end{lemma}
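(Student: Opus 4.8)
The plan is to treat each submanifold in $\frX$ separately and then take the minimum of finitely many thresholds. Fix $X\in\frX$ and let
$$
C_X = \{\, x\in X \mid d(f|_X)_x = 0 \,\}
$$
be the critical locus of $f|_X$; being cut out of $X$ by the vanishing of the definable section $d(f|_X)$ of $T^*X$, it is a definable subset, relatively closed in $X$. The crux is the claim that the set of critical values $f(C_X)\subset\RR$ is \emph{finite}. Granting this, we may pick $\epsilon_X>0$ with $(0,\epsilon_X)\cap f(C_X)=\vn$, and then $\epsilon = \min_{X\in\frX}\epsilon_X>0$ does the job: if $x\in X$ and $f(x)\in(0,\epsilon)$ then $f(x)\notin f(C_X)$, so $x\notin C_X$, i.e.\ $d(f|_X)_x\neq 0$, which is exactly the asserted non-vanishing over $f^{-1}(0,\epsilon)$.

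To prove the claim, note that since $X$ is relatively compact its closure $\ol X$ is compact, hence $\ol{C_X}$ (closure in $M$) is compact and $f|_{\ol{C_X}}\colon \ol{C_X}\to\RR$ is proper. Apply the stratification theorem for proper maps to obtain $C^p$ Whitney stratifications of $\ol{C_X}$ and of $\RR$ compatible with $f$ and with the subset $C_X$, so that $C_X$ is a union of strata. Let $S$ be a stratum contained in $C_X$. Since $S\subset X$ are both submanifolds of $M$ we have $T_xS\subseteq T_xX$ for every $x\in S$, and since each such $x$ lies in $C_X$ the covector $d(f|_X)_x$ already vanishes on $T_xX$; hence $d(f|_S)_x = d(f|_X)_x|_{T_xS}=0$. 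Thus $f|_S$ has identically vanishing differential, so it is constant on each connected component of $S$; as $S$ is definable it has finitely many connected components, and there are finitely many strata, so $f(C_X)=\bigcup_{S\subset C_X}f(S)$ is finite.

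The only real content is the finiteness of $f(C_X)$, a tame Sard-type statement; one could instead invoke the o-minimal Sard theorem — the critical values of a definable $C^1$ map form a definable set of dimension strictly less than that of the target, here of dimension $0$ in $\RR$, hence finite — but the stratification argument keeps everything within the toolkit already recorded. Two routine points deserve a word of care: that $C_X$ is genuinely definable (which uses that $f$ is $C^p$ with $p\geq 1$, so that $d(f|_X)$ is a definable section of $T^*X$), and that replacing $C_X$ by its closure $\ol{C_X}$ is legitimate (which uses relative compactness of $X$ and is what lets us invoke the stratification theorem, stated for closed subsets and proper maps). The hypothesis that $f$ is non-constant plays no role for a fixed $X$; it is harmless and may simply be retained.
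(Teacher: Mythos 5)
Your proof is correct, but it takes a genuinely different route from the paper's. The paper argues directly by contradiction: assuming the conclusion fails gives a sequence in $A = \{x\in X \mid f(x)>0,\ (df|_X)_x=0\}$ with $f\to 0$; compactness of $\ol X$ and curve selection produce a definable curve $\gamma\colon[0,1)\to M$ with $\gamma(0)\in\ol A\setminus A$, $\gamma(0,1)\subset A$, and $f|_\gamma(0)=0 < f|_\gamma(t)$; hence $(f|_\gamma)'(t)>0$ somewhere, contradicting $\gamma'(t)\in T_{\gamma(t)}X$ and $(df|_X)_{\gamma(t)}=0$. You instead establish a slightly stronger and reusable fact — a tame Sard theorem, that the critical value set $f(C_X)\subset\RR$ is finite — via the stratification-of-proper-maps theorem and the finiteness of connected components of definable sets, and then choose $\epsilon$ avoiding this finite set. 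Both arguments stay within the o-minimal toolbox the paper records; the paper's is more minimalist (curve selection plus the mean value theorem), while yours yields the sharper structural statement that the set of ``bad'' values is finite, from which the lemma drops out immediately, and as you note could be replaced wholesale by a direct appeal to o-minimal Sard. One could quibble that the paper's curve-selection argument also implicitly needs the tameness of $A$ (since it invokes curve selection on $A$), so the underlying hypotheses are the same; the divergence is purely in proof strategy.
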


\begin{proof}

It suffices to prove the assertion for a single relatively compact  submanifold $X\subset M$.
Consider the subset $A = \{x\in X \, |\, f(x)> 0, (df|_X)_x = 0\}$. If the assertion does not hold, then there exists a sequence $x_n \in A$ such that $\lim_{n\to \oo} f(x_n) = 0$. By the compactness of $\ol X \subset M$, after passing to a subsequence, there is a limit $\lim_{n\to \oo} x_n = x_\oo \in \ol X$. By curve selection applied to $A$, 
there exists a map $\gamma:[0, 1) \to M$ such that
$\gamma(0) = x_\oo$, and $\gamma(0, 1) \subset A$. Consider the restriction $f|_\gamma:[0,1)\to \RR$, and note $f|_\gamma(0) = f(x_\oo) = 0$, and $f|_\gamma (t) > 0 $, for $t\in (0,1)$. Thus there exists $t\in (0, 1)$ such that $(f|_\gamma)'(t)  > 0$. But  this contradicts $(df|_X)_{\gamma(t)} = 0$. 
\end{proof}


\subsection{Microlocal geometry}

We record here notation for various standard constructions about manifolds and their cotangent bundles.

 Given a manifold $X$, we write $T^*X$ (resp. $TX$) for its cotangent (resp. tangent) bundle, and denote its points by pairs $(x, \xi_x) \in T^*X$ where $x\in X$, $\xi_x \in T^*_x X$ (resp. $(x, v_x)\in TX$ where $x\in X$, $v_x \in T_x X$). 
 Given a submanifold $Y \subset X$, we write $T^*_Y X$ (resp. $N_Y X$) for its conormal  (resp. normal) bundle.

Given a submersion $\pi:X \to Y$, we have the dual   exact sequences of vector bundles
 $$
 \xymatrix{
 0 \ar[r] &  \pi^*(T^*Y)  \ar[r]^-{(d\pi)^*} & T^*X \ar[r]^-\Pi & T^*_\pi \ar[r] & 0
 }
 $$
 $$
 \xymatrix{
 0 & \ar[l]   \pi^*(TY) &  \ar[l]_-{ d\pi}  TX  &  \ar[l]_-{\Pi^*}  T_\pi &  \ar[l]  0
 }
 $$
We refer to 
$T^*_\pi$ (resp. $T_\pi$) as the relative cotangent (resp. tangent) bundle.

We equip $T^*X$ with the standard symplectic structure $\omega = d\xi dx$ with primitive $\xi dx$ and Liouville vector field $\xi\partial_{\xi} = \omega^{-1}(\xi dx)$. We say a subset $Y \subset T^*X$ is {\em conic} if it is invariant under the dilations $e^{t \xi \partial_\xi} \cdot (x, \xi_x) = (x, e^t \xi_x)$, for $t\in \RR$. We say a subset $Y \subset T^*X$ is {\em isotropic} if for all submanifolds $Z \subset Y$, we have $\omega|_Z = 0$ (equivalently, there exists a dense locally closed submanifold $Z \subset Y$ of possibly varying dimension such that 
$\omega|_Z  = 0$).
We say a subset $Y \subset T^*X$ is {\em Lagrangian} if it is isotropic and of pure dimension $\dim Y = \dim X$. For any submanifold $Y \subset X$, its conormal bundle $T^*_Y X \subset T^*X$ is conic Lagrangian. In particular,  the conormal bundle of $X$ itself is the zero-section $T^*_X X \simeq X$.
 
 We write $T^\oo X$ for the  cosphere bundle
 $$
 \xymatrix{
 T^\oo X = (T^*X \setminus T^*_X X)/\RR_{>0}
 }
 $$
 We equip $T^\oo X$ with the standard co-oriented contact structure $\zeta =\ker(\xi dx)$.
 We say a subset $Y \subset T^\oo X$ is {\em isotropic} if for all submanifolds $Z\subset Y$, we have $TZ \subset \zeta$ (equivalently, there exists a dense locally closed submanifold $Z \subset Y$ of possibly varying dimension such that 
$TZ   \subset \zeta$). We say a subset $Y \subset T^*X$ is {\em Legendrian} if it is isotropic and of pure dimension $\dim Y = \dim X-1$.

 For conic $\Lambda \subset T^*X$, we write $\Lambda^\oo \subset T^\oo X$ for its projectivization
 $$
 \xymatrix{
 \Lambda^\oo  = (\Lambda \setminus (\Lambda \cap T^*_X X))/\RR_{>0}
 }
 $$
 Note $\Lambda$ is conic Lagrangian if and only if $\Lambda^\oo$ is Legendrian.

 \begin{lemma}
 Let $\Lambda^\oo \subset T^\oo X$ be closed and isotropic. 
 
 Then there exists a Whitney stratification $\{ X_i\}_{i \in I}$ of $X$ such that we have the containment
 $$
 \Lambda^\oo \subset  \bigcup_{i \in I} T^\oo_{X_i} X
 $$ \end{lemma}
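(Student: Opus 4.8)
The plan is to pass to the cosphere bundle and use the fact that $\Lambda^\oo$, being closed and isotropic, is in particular a closed subanalytic (or definable) subset of $T^\oo X$ whose smooth locus is tangent to the contact distribution $\zeta$. First I would stratify $\Lambda^\oo$ itself: choose a Whitney stratification $\Lambda^\oo = \coprod_\alpha L_\alpha$ by locally closed submanifolds, with each $L_\alpha$ connected, such that $T L_\alpha \subset \zeta$ (this is possible because isotropy of $\Lambda^\oo$ means every submanifold of it is tangent to $\zeta$, or equivalently a dense open piece is, and Whitney strata may be refined to land in that dense locus). The key geometric input is then the classical fact that a connected isotropic submanifold $L$ of $T^\oo X$ which is conic-Legendrian-like is, locally, contained in a single conormal variety: more precisely, the base projection $\pi\colon T^\oo X \to X$ restricted to $L$ has locally constant rank, its image $\pi(L)$ is (after possibly further stratifying) a locally closed submanifold $Z$ of $X$, and because $TL \subset \zeta = \ker(\xi\,dx)$ one gets $L \subset T^\oo_{Z} X$. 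This is the standard ``an isotropic in $T^*X$ is swept out by conormals'' statement, and it is where the contact condition does its work.

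Concretely, the steps in order: (1) fix a Whitney stratification $\{L_\alpha\}$ of the closed subset $\Lambda^\oo \subset T^\oo X$ with $TL_\alpha \subset \zeta$ for each $\alpha$, using the tame-geometry existence statement for Whitney stratifications quoted in \S2.1.1 together with the isotropy hypothesis. (2) For each stratum $L_\alpha$, analyze $\pi|_{L_\alpha}$; after refining the stratification of $L_\alpha$ (again by tame stratification of maps, \S2.1.1, applied to $\pi|_{L_\alpha}$) we may assume $\pi|_{L_\alpha}$ has constant rank and $\pi(L_\alpha) = Z_\alpha$ is a locally closed submanifold of $X$, with $L_\alpha \to Z_\alpha$ a submersion onto its image. (3) Check that $TL_\alpha \subset \zeta$ forces $L_\alpha \subset T^\oo_{Z_\alpha} X$: a tangent vector to $L_\alpha$ projecting to $v \in T_z Z_\alpha$ pairs to zero with the tautological form, and letting $v$ range over $T_z Z_\alpha$ (using the submersion onto $Z_\alpha$) shows the covector at that point of $L_\alpha$ annihilates $T_z Z_\alpha$, i.e. lies in the conormal. (4) Collect the finitely-many-locally subsets $\{Z_\alpha\}$ together with $X$ and all the relevant frontier pieces, and invoke the tame existence statement once more to produce a Whitney stratification $\{X_i\}_{i\in I}$ of $X$ compatible with all the $Z_\alpha$; since each $Z_\alpha$ is then a union of strata $X_i$, one has $T^\oo_{Z_\alpha} X \subset \bigcup_i T^\oo_{X_i} X$ (a conormal of a union of strata is contained in the union of conormals of the strata, by the standard behaviour of conormals under stratification), and therefore $\Lambda^\oo = \bigcup_\alpha L_\alpha \subset \bigcup_{i \in I} T^\oo_{X_i} X$.

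The main obstacle, and the step that needs the most care, is step (3) combined with the passage from the $Z_\alpha$ to a genuine \emph{Whitney} stratification $\{X_i\}$ in step (4): one must ensure that the finitely many local models $Z_\alpha$ of the base projections of strata patch into a globally coherent locally finite family compatible with a single Whitney stratification, and that the conormals behave well under the refinement — in particular that passing from $Z_\alpha$ to its strata $X_i$ only enlarges the conormal union (never shrinks below $L_\alpha$). This is routine given the tame-geometry toolkit quoted in the paper, but it is the place where the finiteness/local-finiteness bookkeeping has to be done honestly. A secondary subtlety is making sure the stratification of $\Lambda^\oo$ in step (1) can be taken so that $TL_\alpha \subset \zeta$ holds on the nose for every stratum and not merely generically; this follows by first stratifying compatibly with the dense isotropic locus provided by the definition of ``isotropic'', then refining.
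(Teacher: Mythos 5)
Your argument is correct and is essentially the paper's proof, unwound into more explicit steps. The paper simply stratifies the proper map $\pi|_{\Lambda^\oo}\colon \Lambda^\oo \to X$ (proper because $\Lambda^\oo$ is closed and the cosphere fibers are compact) compatibly on source and target, notes that then each stratum $S$ of $\Lambda^\oo$ submerses onto a stratum $\pi(S)$ of $X$, and concludes $S \subset T^*_{\pi(S)}X$ exactly by your pairing-with-the-tautological-form calculation in step (3). Two small remarks on your version: the worry in step (1) is unnecessary, since the paper's definition of ``isotropic'' says \emph{every} submanifold of $\Lambda^\oo$ has tangent space in $\zeta$, so any Whitney stratification of $\Lambda^\oo$ already has $TL_\alpha \subset \zeta$ on the nose with no refinement needed; and in step (2) you should apply the stratification-of-maps statement to the single proper map $\pi|_{\Lambda^\oo}$ rather than stratum by stratum (the quoted statement in \S2.1.1 requires properness, and $\pi|_{L_\alpha}$ on a locally closed stratum need not be proper; working with $\pi|_{\Lambda^\oo}$ at once also collapses your steps (2) and (4) into a single invocation and avoids the bookkeeping you flag as the main obstacle). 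With those simplifications your argument reduces to the paper's.
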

 
 \begin{proof}
 Consider the natural projection $\pi:T^\oo X \to X$.   Choose Whitney stratifications of $\Lambda^\oo$, $X$  compatible with $\pi$. Thus for each stratum $S \subset \Lambda$, its image $\pi(S) \subset X$ is a stratum,   and the restriction $\pi|_S: S \to \pi(S)$ is a submersion. 
Since $\Lambda $ is isotropic, it follows that $S \subset T^*_{\pi(S)} X$.
 \end{proof}

\subsection{Sheaves}
We fix a field $k$ and work with $k$-linear differential graded (dg) derived categories, or alternatively $k$-linear stable $\infty$-categories.  We write $\dgCat_k$ for the $\oo$-category of  $k$-linear dg categories.
 All functor are derived  without further comment.

Given a  space $X$, we write $\Sh(X)$ for the dg derived category of complexes of sheaves of $k$-modules on $X$.
 We often abuse terminology and refer to objects of $\Sh(X)$ as sheaves.
 
 Given a stratification $\frX$ of $X$, we say   a sheaf $\cF\in \Sh(X)$ is weakly $\frX$-constructible if its restriction $\cF|_S$ to each stratum $S\in \frX$ is cohomologically locally constant. We write $\Sh_\frX \subset \Sh(X)$ for the full dg subcategory  of weakly $\frX$-constructible sheaves. 
 We say   a sheaf $\cF\in \Sh(X)$ is weakly constructible if $\cF$ is weakly $\frX$-constructible for some stratification $\frX$. 
 Throughout the paper, we will  work only with weakly constructible sheaves.

\subsubsection{Useful lemmas}

 \begin{lemma}\label{l: useful 1}
 Let $M$ be a manifold with $X\subset M$ a  compact subset.
 
 Let $f:M\to \RR_{\geq 0}$ be a function.
 
 Suppose $\frX$ is a Whitney stratification of $X$ such that $X \cap \{ f = 0\}$ is a union of strata.
 Let $\frX_{>0}$ and $\frX_{>\epsilon}$ denote the induced stratifications of $X \cap \{ f > 0\}$
 and $X \cap \{ f > \epsilon\}$ respectively.

 Then for all sufficient small $\epsilon >0$, restriction is an equivalence
$$
\xymatrix{
\Sh_{\frX_{>0}}(X \cap \{ f > 0\}) \ar[r]^-\sim & \Sh_{\frX_{>\epsilon}}(X \cap \{f> \epsilon\}) 
}
$$ 
compatible with global sections.
 \end{lemma}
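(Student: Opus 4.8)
\emph{The plan.} I would prove that restriction along the open inclusion $j\colon X\cap\{f>\epsilon\}\hookrightarrow X\cap\{f>0\}$ is an equivalence by producing a stratified product structure on a small collar $X\cap\{0<f<\epsilon_0\}$ and then invoking descent and homotopy invariance for weakly constructible sheaves. (We may assume $f$ non-constant, the constant cases being immediate.)

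\emph{Step 1: the collar.} Since $X$ is compact, $\frX$ is finite, and since $X\cap\{f=0\}$ is a union of strata, the strata not contained in $\{f=0\}$ form a Whitney stratification of the locally closed set $X\cap\{f>0\}$, namely $\frX_{>0}$. Applying Lemma~\ref{l:trans} to the positive-dimensional strata of $\frX$ produces $\epsilon_1>0$ over which each $df|_S$ is nowhere zero; discarding in addition the finitely many $f$-values of the zero-dimensional strata lying in $\{f>0\}$, I obtain $\epsilon_0>0$ such that every stratum meeting $X\cap\{0<f<\epsilon_0\}$ is positive-dimensional and is mapped submersively by $f$ there. Thus $f\colon X\cap\{0<f<\epsilon_0\}\to(0,\epsilon_0)$ is a stratified submersion, and it is proper (preimages of compact subintervals are closed in the compact set $X$). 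By Thom's first isotopy lemma in the tame setting, $f$ is a stratified locally trivial fibration over $(0,\epsilon_0)$, hence, the interval being contractible, a stratified trivial one: there is a stratum-preserving homeomorphism $X\cap\{0<f<\epsilon_0\}\simeq L\times(0,\epsilon_0)$ over $(0,\epsilon_0)$, where $L=X\cap\{f=c\}$ for a chosen $c\in(0,\epsilon_0)$ with its induced Whitney stratification $\frX_L$, and the collar stratification corresponds to $\frX_L\times(0,\epsilon_0)$.

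\emph{Step 2: descent.} Fix $\epsilon$ with $0<\epsilon<\epsilon_0$ and set $Y=X\cap\{f>0\}$, $U=X\cap\{f>\epsilon\}$, $V=X\cap\{0<f<\epsilon_0\}$, so $Y=U\cup V$ and $U\cap V=X\cap\{\epsilon<f<\epsilon_0\}$; throughout I write $\Sh_\frX(-)$ for weakly constructible sheaves with respect to the restricted stratifications, so that $\Sh_\frX(Y)=\Sh_{\frX_{>0}}(X\cap\{f>0\})$ and $\Sh_\frX(U)=\Sh_{\frX_{>\epsilon}}(X\cap\{f>\epsilon\})$. Under the collar trivialization, $V\simeq L\times(0,\epsilon_0)$ and $U\cap V\simeq L\times(\epsilon,\epsilon_0)$ compatibly with the stratifications, so by homotopy invariance of weakly constructible sheaves along a contractible interval factor, pullback identifies both $\Sh_\frX(V)$ and $\Sh_\frX(U\cap V)$ with $\Sh_{\frX_L}(L)$, compatibly with the restriction $\Sh_\frX(V)\to\Sh_\frX(U\cap V)$; hence that restriction is an equivalence, and the analogous statement holds for $R\Gamma$ since $R\Gamma(L\times I,\pr^*\cG)\simeq R\Gamma(L,\cG)$ for a contractible interval $I$. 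Since $\Sh_\frX(-)$ satisfies descent along the open cover $Y=U\cup V$ (weak constructibility being local), $\Sh_\frX(Y)\simeq\Sh_\frX(U)\times_{\Sh_\frX(U\cap V)}\Sh_\frX(V)$; as the leg $\Sh_\frX(V)\to\Sh_\frX(U\cap V)$ is an equivalence, this fiber product collapses onto $\Sh_\frX(U)$, the resulting equivalence being exactly $j^*$. The identical Mayer--Vietoris argument applied to $R\Gamma$ shows $j^*$ is compatible with global sections.

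\emph{The main obstacle.} The substantive point is Step~1: one must verify that $\epsilon_0$ can be chosen so that Thom's first isotopy lemma genuinely applies — that no stratum, in particular no zero-dimensional one, is mapped non-submersively into $(0,\epsilon_0)$ (this is where Lemma~\ref{l:trans} together with the finiteness afforded by tameness enters) and that properness holds, which uses the compactness of $X$. Once the stratified collar product structure is in hand, the rest is the formal machinery of descent and homotopy invariance for weakly constructible sheaves.
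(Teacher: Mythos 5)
Your proof is correct, and the geometric input — Lemma~\ref{l:trans} to make $f$ a stratified submersion on a small region, followed by Thom's first isotopy lemma — is exactly what the paper uses. Where you diverge is in the last step: the paper extracts from Thom's lemma a stratified isotopy of $X \cap \{f > 0\}$ onto $X \cap \{f > \epsilon\}$ and immediately concludes, whereas you instead extract a stratified product collar $L \times (0,\epsilon_0)$ and then pass to the sheaf statement via Mayer--Vietoris descent along the cover $Y = U \cup V$ together with stratified homotopy invariance along the contractible interval factor. Both packagings are valid; yours is a bit longer but spells out the passage from the topological statement to the categorical one more formally, avoiding having to unwind how a stratified isotopy induces an equivalence of $\Sh_{\frX}$-categories and a compatibility with global sections. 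One small remark: Lemma~\ref{l:trans}, as stated and proved in the paper, already handles zero-dimensional strata correctly (for a point $p$ the set $A$ in that proof contributes no sequence with $f(x_n)\to 0$, so one simply takes $\epsilon < f(p)$), so your separate treatment of zero-dimensional strata, while harmless and careful, is not actually necessary.
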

 
 \begin{proof}
By Lemma~\ref{l:trans}, we may choose $\epsilon>0$ so that in the region $f^{-1}(0, 2\epsilon)$, the restriction of $df$ to any stratum of $\frX$ is non-zero. Then by  Thom's first isotopy lemma, there is a stratified isotopy of 
$X \cap \{ f > 0\}$ to $X \cap \{ f > \epsilon\}$.
 \end{proof}

 \begin{lemma}\label{l: useful 2}
 Let $M$ be a manifold with $X\subset M$ a  compact subset.
 
 Let $f:M\to \RR_{\geq 0}$ be a function. 
 
 Consider the natural inclusions
 $i:Y = X\cap \{ f = 0\} \to X$, $j:U = X\cap \{ f > 0\} \to X$,
 $s_\epsilon:S_\epsilon = X\cap \{ f = \epsilon\} \to X$, for $\epsilon>0$.
 
 Suppose $\frX$ is a Whitney stratification of $X$ such that $Y$ is a union of strata.

 Then for $\cF\in \Sh_\frX(X)$, and all sufficient small $\epsilon >0$, there is a canonical equivalence
$$
\xymatrix{
\Gamma(Y, i^*j_*\cF) \simeq \Gamma(S_{\epsilon}, s_\epsilon^*\cF)
}
$$ 

 \end{lemma}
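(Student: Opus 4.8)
The plan is to identify both sides with the global sections of a Milnor-type fibre of $f$ along $Y$, the bridge between them being a trivialization supplied by Thom's first isotopy lemma. First I would rewrite the left-hand side as a filtered colimit. Since $Y$ is compact, the open sets $N_\delta = X \cap \{f < \delta\}$, $\delta > 0$, form a cofinal system of neighbourhoods of $Y$ in $X$: given an open $N \supseteq Y$ in $X$, the function $f$ attains a positive minimum $\delta_0$ on the compact set $X \setminus N$, whence $N_{\delta_0} \subseteq N$. Thus, by the standard description of sections over a compact subset (cf.~\cite{KS}), $\Gamma(Y, i^*\cG) \simeq \colim_{\delta} \Gamma(N_\delta, \cG)$ for every $\cG \in \Sh(X)$, with transition maps the restrictions. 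Taking $\cG = j_* j^* \cF$ and using $\Gamma(N_\delta, j_* j^* \cF) \simeq \Gamma(N_\delta \cap U, \cF)$ (base change along the open immersion $j$) yields
\[
\Gamma(Y, i^* j_* \cF)\;\simeq\;\colim_{\delta > 0}\,\Gamma\big(X \cap \{0 < f < \delta\},\ \cF\big),
\]
a filtered colimit along restriction maps as $\delta \to 0$.

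Next I would contract each small term onto $S_\epsilon$. We may assume $f|_M$ is non-constant, the remaining cases being trivial. Applying Lemma~\ref{l:trans} to the finitely many strata of $\frX$ — relatively compact submanifolds of $M$ since $X$ is compact — fix $\epsilon_0 > 0$ so that $df$ restricts to a nowhere-zero form on each stratum throughout $f^{-1}(0, 2\epsilon_0)$. Then for $0 < \epsilon < \delta \leq 2\epsilon_0$ the restriction $f\colon X \cap \{0 < f < \delta\} \to (0,\delta)$ is proper (preimages of compact subintervals are closed in $X$, hence compact) and a stratified submersion, so by Thom's first isotopy lemma it is a locally trivial stratified fibration, hence — the base $(0,\delta)$ being contractible — a trivial one, with a stratified trivialization sending $S_\epsilon \times \{\epsilon\}$ identically to the fibre $S_\epsilon$. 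Pulling $\cF$ back along this trivialization gives a sheaf on $S_\epsilon \times (0,\delta)$ that is weakly constructible for the product stratification $\frX|_{S_\epsilon} \times \{(0,\delta)\}$; standard homotopy invariance identifies such a sheaf with the pullback of its restriction to $S_\epsilon \times \{\epsilon\}$, and taking global sections (and using that $(0,\delta)$ is contractible) shows the restriction map $\Gamma(X \cap \{0 < f < \delta\}, \cF) \to \Gamma(S_\epsilon, s_\epsilon^*\cF)$ is an equivalence.

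Finally I would assemble the two steps. For $\delta' < \delta \leq 2\epsilon_0$ and any $\epsilon \in (0,\delta')$, the previous step factors the colimit's transition map $\Gamma(X \cap \{0 < f < \delta\}, \cF) \to \Gamma(X \cap \{0 < f < \delta'\}, \cF)$ through $\Gamma(S_\epsilon, s_\epsilon^*\cF)$ as a composite of two equivalences; hence every transition map in the cofinal subsystem $\{\delta \leq 2\epsilon_0\}$ of the colimit in the first step is an equivalence, and a filtered colimit whose structure maps are all equivalences is equivalent to each of its terms. Combined with the equivalence of the second step, this gives the claimed canonical equivalence $\Gamma(Y, i^* j_* \cF) \simeq \Gamma(S_\epsilon, s_\epsilon^*\cF)$ for all sufficiently small $\epsilon$. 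I expect the main obstacle to be the second step: one must make sure the trivialization coming from Thom's first isotopy lemma is compatible with the stratification closely enough that homotopy invariance genuinely applies to the weakly $\frX$-constructible sheaf $\cF$ (and that it restricts to the identity on $S_\epsilon$); the cofinal-neighbourhood colimit, the properness of $f$ over $(0,\delta)$, and the submersion condition extracted from Lemma~\ref{l:trans} are routine in the tame setting.
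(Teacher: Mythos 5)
Your argument is correct and follows essentially the same route as the paper: choose $\epsilon$ via Lemma~\ref{l:trans} so that $df$ is nonvanishing on each stratum over $(0,2\epsilon)$, then invoke Thom's first isotopy lemma to identify sections over the thin shell $X \cap \{0 < f < \delta\}$ with sections over $S_\epsilon$. The paper packages this as a stratified deformation retraction and writes $\Gamma(Y, i^*j_*\cF) \simeq \Gamma(X\cap\{0<f<2\epsilon\},\cF) \simeq \Gamma(S_\epsilon, s_\epsilon^*\cF)$ in one line; you spell out the suppressed steps — the cofinal neighbourhood system via compactness of $X\setminus N$, the stabilization of the colimit, and the properness of $f$ over compact subintervals — which is exactly the content the paper's ``Thus we have'' glosses over.
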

 
 \begin{proof}
By Lemma~\ref{l:trans}, we may choose $\epsilon>0$ so that in the region $f^{-1}(0, 2\epsilon)$, the restriction of $df$ to any stratum of $\frX$ is non-zero. Then by  Thom's first isotopy lemma, there is a stratified deformation retraction of 
$X \cap \{ 2\epsilon > f > 0\}$ into $S_\epsilon = X \cap \{ f = \epsilon\}$. Thus we have
$$
\xymatrix{
\Gamma(Y, i^*j_*\cF) \simeq  \Gamma(X \cap \{ 2\epsilon > f > 0\}, \cF) \simeq  \Gamma(S_{\epsilon}, s_\epsilon^*\cF)
}
$$ 
 \end{proof}

 \begin{remark}
 Both Lemmas~\ref{l: useful 1} and~\ref{l: useful 2} and their proofs hold more generally when $X \subset M$ is not necessarily compact but $f|_X$ is proper over $[0, \delta)$, for some $\delta>0$.
 \end{remark}

 \subsubsection{Singular support}
 To a weakly constructible sheaf $\cF\in \Sh(X)$, following Kashiwara-Schapira~\cite{KS},  we assign its  singular support  $\ssupp(\cF) \subset T^*X$.  Recall $\ssupp(\cF)$ is a closed, conic, Lagrangian subset of $T^*X$, and
if $\cF$ is weakly $\frX$-constructible for a   stratification $\frX$, then $\ssupp(\cF) \subset \bigcup_{X_i \in \frX} T^*_{X_i} X$.

 A codimension zero submanifold with corners $U \subset  X$ is locally modeled on 
 $$
 \RR^{k} \times \RR_{\geq 0}^{n-k} \subset \RR^n
 $$
 for varying~$k$. We regard $U$ as stratified by its open  interior $U_0 \subset U$ and boundary faces $U_i \subset \partial U$ of codimension $i>0$.
 For each such stratum $U_i \subset U$, its outward conormal cone $T^+_{U_i} X \subset   T^*X$ is locally modeled on  
 $$
 (\RR^{k} \times \{0\}) \times (\{0\} \times (\RR_{\leq 0})^{n-k}) \subset \RR^n \times \RR^n \simeq T^*\RR^n
 $$
  In particular, for the interior $U_0 \subset U$, we recover the zero-section $T^+_{U_0} X \simeq  U_0 \subset   T^*U_0 \subset T^*X$.
 
 Given a  codimension zero submanifold with corners $U \subset  X$, we refer to 
 $$
 \Lambda_U = \bigcup_i T^+_{U_i} X \subset T^*X
 $$ as the {\em outward conormal Lagrangian} of $U$.
 
  Note if $i_0: U_0 \to X$ denotes the inclusion of the interior of $U$, then $\ssupp(i_! k_{U_0}) = \Lambda_U$, and
   for any $\cF\in \Sh(X)$, adjunction gives a natural equivalence $\Gamma(U_0, \cF) \simeq \Hom(i_!k_{U_0}, \cF)$. 
  If the intersection $\ssupp(\cF) \cap \Lambda_{U}$ lies in the zero-section, then by the non-characteristic propagation lemma of Kashiwara-Schapira~\cite{KS}, 
  for  perturbations $U_0^t \subset X$ of the open $U_0$, the sections $\Gamma(U_0^t, \cF)$ are locally constant in the parameter $t$.


\section{Nearby cycles}

We first review the traditional construction of nearby cycles over a one-dimensional base, then develop a framework to organize iterated nearby cycles over flags in higher-dimensional bases.
 
\subsection{One-dimensional base}

Consider the diagram
\beq\label{eq:basic diag}
\xymatrix{
\tilde D^\times \ar[r]^-p & D^\times \ar[r]^-j &  D & \ar[l]_-i \{0\}
}
\eeq
with
$ D = \{ z\in \CC \, |\, |z|^2 <1\}$  the open unit disk, $D^\times = \{ z\in \CC \, |\, 0 < |z|^2 <1\}$ the punctured open unit disk,  
 $i, j$ the natural inclusions, and $p$ the universal cover
$$
\xymatrix{
 p:\tilde D^\times = \HH = \{z\in \CC \, |\,  \Im(z) >0\}   \ar[r] &   D^\times
& 
p(z) = e^{iz}
}
$$
with respect to base points $z_0 = e^{-1} \in D$, and $\tilde z_0 = i\in \tilde D^\times$.

Let $f:X \to D$ be a map of complex manifolds, and consider the diagram with Cartesian squares
$$
\xymatrix{
\ar[d] \tilde X^\times \ar[r]^-{p_X} &  \ar[d] X^\times \ar[r]^-{j_X} \ar[d] &  X  \ar[d]^-f& \ar[l]_-{i_X} X_0 \ar[d] \\
\tilde D^\times \ar[r]^-p & D^\times \ar[r]^-j &  D & \ar[l]_-i \{0\}
}
$$

\begin{defn}
The nearby cycles functor for $f :X\to D$ is defined by 
$$
\xymatrix{
\psi_f:\Sh(X^\times) \ar[r] & \Sh(X_0) & \psi_f = i_X^*j_{X*}p_{X*} p_X^* 
}
$$
\end{defn}

\begin{remark}
For future arguments, it will be convenient to separate the distinct aspects of the nearby cycles. Let us 
define the unwinding functor
$$
\xymatrix{
u_f:\Sh(X^\times) \ar[r] & \Sh(X^\times) &  u_f = p_{X*} p_X^* 
}
$$
and  naive nearby cycles
 $$
\xymatrix{
\nu_f:\Sh(X^\times) \ar[r] & \Sh(X_0) & \nu_f = i_X^*j_{X*}
}
$$
so that we have $\psi_f = \nu_f \circ u_f$.

\end{remark}

Let us briefly mention some standard properties of nearby cycles.

\subsubsection{Monodromy}

Note that $ \Aut(p_X) \simeq \ZZ$ generated by the deck transformation $\tau_X:\tilde X^\times  \to \tilde X^\times $, $\tau_X(x, z) = (x, z + 2\pi)$. From the composition identity $\tau_X^* p_X^* \simeq p_X^*$ and base-change identity 
$p_{X*} \simeq p_{X*}\tau_X^*$, we obtain an automorphism
$$
\xymatrix{
m_X:u_f = p_{X*}p_X^* \ar[r]^-\sim & p_{X*}\tau^*p_X^* \ar[r]^-\sim & p_{X*}p_X^* = u_f
}
$$
The monodromy automorphism of the nearby cycles  $\psi_f = \nu_f \circ u_f$ is defined by 
$$
m_f = \nu_f(m_X)\in \Aut(\psi_f)
$$

\subsubsection{Constructibility}


Suppose $\frX = \{X_a\}_{a\in A}$ is a stratification of $X$ so that $X_0$ is a union of strata.
Let $\frX^\times$, $\frX_0$ denote the respective induced stratifications of $X^\times$,
$X_0$.

If $\cF\in \Sh(X^\times)$ is weakly $\frX^\times$-constructible, then $\psi_f\cF\in \Sh(X_0)$ is weakly $\frX_0$-constructible. 

Suppose further that $f$ is compatible with $\frX$ and
 the stratification $\frD = \{\{0\}, D^\times\}$ of $D$.

If $\cF\in \Sh(X^\times)$ is   $\frX^\times$-constructible, then
 $\psi_f \cF \in \Sh(X_0)$ is $\frX_0$-constructible.
 
 \subsubsection{Compatibilities}

Consider a map of complex manifolds $g:X'\to X$.

If $g$ is proper, then there is a natural equivalence $g_! \psi_{g\circ f}  \simeq \psi_f g_!$.
 
If $g$ is smooth, then there is a natural equivalence $g^* \psi_f  \simeq \psi_{g\circ f} g^*$.

 The shifted nearby cycles $\psi_f[-1]$ commutes with Verdier duality $\cD\psi_f[-1] \simeq \psi_f[-1]\cD$.


\subsection{Higher-dimensional base}\label{ss:hd base}

Fix $n>0$, and write $[n]  = \{1, \ldots, n\}$. 


For $a\subset [n]$ nonempty,  taking maps from $a$ into the diagram~\eqref{eq:basic diag} provides a diagram
\beq\label{eq:prod diag}
\xymatrix{
\tilde D^\times(a) \ar[r]^-{p(a)} & D^\times(a) \ar[r]^-{j(a)} &  D(a) & \ar[l]_-{i(a)} \{0\}
}
\eeq
Note that $D([n]) = D^n$. 

For $a,  b  \subset [n]$ disjoint with $a$ nonempty, taking the product of ~\eqref{eq:prod diag} with $D^\times(b)$ provides a diagram
\beq\label{eq:next prod diag}
\xymatrix{
\tilde D^\times(a) \times D^\times(b) \ar[r]^-{p(a, b)} & D^\times(a)  \times D^\times(b)\ar[r]^-{j(a, b)} &  D(a)  \times D^\times(b) & \ar[l]_-{i(a, b)}  \{0\} \times D^\times(b)
}
\eeq
Note that $  D^\times(a)  \times D^\times(b) = D^\times(a\cup b)$.
 Note if $b = \emptyset$, then $D^\times(\emptyset)$ is a point and \eqref{eq:next prod diag} coincides with \eqref{eq:prod diag}.

Now let $f:X \to D^n = D([n])$ be a map of complex manifolds. For $a,  b  \subset [n]$ disjoint, taking fiber products with \eqref{eq:next prod diag} over $D([n])$ provides a diagram with Cartesian squares
$$
\xymatrix{
\ar[d] \tilde X^\times(a, b) \ar[r]^-{p_X(a, b)} &  \ar[d] X^\times(a\cup b)  \ar[r]^-{j_X(a, b)} \ar[d] &  X(a, b)  \ar[d]^-f& \ar[l]_-{i_X(a, b)} X^\times(b) \ar[d] \\
\tilde D^\times(a) \times D^\times(b) \ar[r]^-{p(a, b)} & D^\times(a)  \times D^\times(b)\ar[r]^-{j(a, b)} &  D(a)  \times D^\times(b) & \ar[l]_-{i(a, b)}  \{0\} \times D^\times(b)
}
$$
Note if $b = \emptyset$, then $D^\times(\emptyset)$ is a point,
and $X^\times(\emptyset) = X_0 = f^{-1}(0)$.

\begin{defn}
 For $f :X\to D^n$, and  $a, b \subset [n]$ disjoint with $a$ nonempty, 
we define the $a \cup b  \rightsquigarrow b$ nearby cycles by
$$
\xymatrix{
\psi^{a \cup b}_b:\Sh(X^\times(a \cup b )) \ar[r] & \Sh(X^\times (b) ) & \psi_b^{a \cup b} = i_X(a, b)^*j_{X}(a, b)_*p_{X}(a, b)_* p_X(a, b)^*
}
$$
\end{defn}

\begin{remark}\label{rem:separate}
It will be convenient to separate the distinct aspects of the $a \cup b  \rightsquigarrow b$ nearby cycles. To that end, we 
define the  $a \cup b  \rightsquigarrow b$ unwinding functor
$$
\xymatrix{
u_b^{a \cup b}:\Sh(X^\times(a \cup b )) \ar[r] & \Sh(X^\times(a \cup b )) &  u_b^{a \cup b} = p_{X}(a, b)_* p_X(a, b)^* 
}
$$
and   $a \cup b  \rightsquigarrow b$ naive nearby cycles
 $$
\xymatrix{
\nu^{a \cup b}_b:\Sh(X^\times(a \cup b )) \ar[r] & \Sh(X^\times (b) ) & \nu^{a \cup b}_b = i_X(a, b)^*j_{X}(a, b)_*
}
$$
so that we have $\psi^{a \cup b}_b = \nu^{a \cup b}_b \circ u_b^{a \cup b}$.

\end{remark}

Note that $ \Aut(p_X(a, b)) \simeq \ZZ^a$ generated by the deck transformation $\tau_X:\tilde X^\times  \to \tilde X^\times $, $\tau_X(x, z) = (x, z + 2\pi)$ applied to each component. Following the construction for traditional nearby cycles recalled above, we obtain a homomorphism 
$$
\xymatrix{
m_X(a, b):\ZZ^a \ar[r] &  \Aut(u^{a\cup b}_b)
}
$$
The monodromy of the nearby cycles  $\psi^{a \cup b}_b = \nu^{a \cup b}_b \circ u_b^{a \cup b}$ is defined
to be 
$$
m^{a \cup b}_b =  \nu^{a \cup b}_b(m_X(a, b)):\ZZ^a \to \Aut(\psi^{a\cup b}_b)
$$

\subsubsection{Iterated nearby cycles}

By a  partial flag $a_\bullet$ in $[n]$, we will mean a sequence of distinct subsets 
$$
\emptyset  = a_0 \subset a_1 \subset \cdots \subset a_{k+1} = [n]
$$
We call $k = \ell(a_\bullet)$ the length of the flag.

\begin{defn}\label{def:iterated nc}
 For $f :X\to D^n$, and a  partial flag $a_\bullet$  in $[n]$, we define the $a_\bullet$-iterated nearby cycles by
$$
\xymatrix{
\psi(a_\bullet) = \psi^{a_1}_\emptyset \circ  \psi^{a_2}_{a_{1}}  \circ  \cdots \circ \psi^{a_{k}}_{a_{k-1}}
\circ \psi^{[n]}_{a_k}:
\Sh(X^\times) \ar[r] & \Sh(X_0 ) 
}
$$ 
where we write $X^\times = X^\times([n]) = f^{-1}((D^\times)^n)$, $X_0 = X^\times(\emptyset) =  f^{-1}(0)$.
%
%
\end{defn}

We define the monodromy of the  $a_\bullet$-iterated nearby cycles 
$$
m(a_\bullet) :\ZZ^n \to \Aut(\psi(a_\bullet))
$$
to be the product of the monodromies of its constituent nearby cycles.

\subsubsection{Isolating the unwinding}

Let us next isolate once and for all the unwinding part of the iterated nearby cycles. Consider the cover
$$ 
\xymatrix{
p_n:\tilde X^\times = X \times_{D^n}  (\tilde D^\times)^n \ar[r] &  X \times_{D^n} (D^\times)^n =  X^\times 
}
$$
and define the maximal unwinding functor
$$
\xymatrix{
u_n:\Sh(X^\times) \ar[r] & \Sh(X^\times) & u_n = p_{n*} p_n^*
}
$$
with its natural monodromy 
$$
m_n:\ZZ^n \to \Aut(u_n)
$$
In terms of prior notation, we have   $u_n = u^{[n]}_\emptyset$ and $m_n = m_X([n], \emptyset)$.

 For a  partial flag $a_\bullet$  in $[n]$,  define the $a_\bullet$-iterated naive nearby cycles by
$$
\xymatrix{
\nu(a_\bullet) = \nu^{a_1}_\emptyset \circ  \nu^{a_2}_{a_{1}}  \circ  \cdots \circ \nu^{a_{k}}_{a_{k-1}}
\circ \nu^{[n]}_{a_k}:
\Sh(X^\times) \ar[r] & \Sh(X_0 ) 
}
$$

   
\begin{lemma}\label{l: unwind}
 The $a_\bullet$-iterated  nearby cycles is canonically equivalent to the maximal unwinding functor followed by the
 $a_\bullet$-iterated naive nearby cycles:
$$
\xymatrix{
\psi(a_\bullet) \simeq 
\nu(a_\bullet) \circ u_n
}
$$
Moreover, the equivalence respects the natural $\ZZ^n$-monodromy actions.

\end{lemma}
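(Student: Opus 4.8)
The plan is to prove the statement by induction on the length $k = \ell(a_\bullet)$, peeling off one stage of the iterated nearby cycles at a time, so that the entire discrepancy between $\psi(a_\bullet)$ and $\nu(a_\bullet)$ gets pushed into a single maximal unwinding $u_n$. The key input is the factorization $\psi^{a\cup b}_b = \nu^{a\cup b}_b \circ u^{a\cup b}_b$ recorded in Remark~\ref{rem:separate}, together with two compatibility facts. First, the unwinding functors at different stages are ``transitive'': performing the unwinding $u^{a_{i}}_{a_{i-1}}$ (which is a pullback–pushforward along a finite-index cover of $X^\times(a_i)$, pulled back over all of $X^\times([n])$) and then the unwinding $u^{a_{i+1}}_{a_i}$ assembles to a single unwinding for the larger set $a_{i+1}$ relative to $a_{i-1}$, because the corresponding covering spaces compose: $\tilde D^\times(a_i)\times D^\times(b)$ sits over $\tilde D^\times(a_{i-1})\times D^\times(b)$ compatibly. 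Iterating from $a_0=\emptyset$ up to $a_{k+1}=[n]$, the composite of all the unwindings is exactly $u_n = u^{[n]}_\emptyset = p_{n*}p_n^*$. Second, and this is the crux, each naive nearby cycles functor $\nu^{a\cup b}_b = i_X(a,b)^* j_X(a,b)_*$ must be commuted past the later unwinding functors, i.e. one needs natural equivalences of the shape
$$
\xymatrix{
\nu^{a_i}_{a_{i-1}} \circ u^{a_{i+1}}_{a_i}\circ \cdots \simeq (\text{same unwinding, suitably interpreted}) \circ \nu^{a_i}_{a_{i-1}}
}
$$
so that all the $u$'s can be migrated to the far right and collapsed into $u_n$, while all the $\nu$'s collect on the left into $\nu(a_\bullet)$.

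First I would set up the two families of maps carefully. For a partial flag $a_\bullet$, at stage $i$ we have $a = a_i \setminus a_{i-1}$ and $b = a_{i-1}$, and the relevant cover $p_X(a,b)$ is pulled back from the cover $p(a,b)$ of \eqref{eq:next prod diag}; the point is that the deck group $\ZZ^{a}$ only involves the coordinates being turned on at this stage, and the cover is a pullback along the map $f$ composed with the appropriate coordinate projection. Transitivity of covers then gives, for the composite of unwindings over a flag, a canonical identification with $u_n$ respecting the $\ZZ^n$-action — here the $\ZZ^n$ on $u_n$ is the product of the stagewise $\ZZ^{a_i\setminus a_{i-1}}$'s under the direct sum decomposition $\ZZ^n = \bigoplus_i \ZZ^{a_i\setminus a_{i-1}}$. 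Next I would establish the commutation $\nu^{a}_b \circ (\text{unwinding in later coordinates}) \simeq (\text{unwinding in later coordinates}) \circ \nu^{a}_b$: this is base change. The later unwinding is pullback–pushforward along a cover that is pulled back from a cover of the \emph{base} factor $D^\times(b')$ for $b'$ a later piece of the flag, and this cover is transverse to (indeed a base change of) the inclusion $i_X(a,b)$ and the open inclusion $j_X(a,b)$, because the coordinates involved in the later unwinding are among those kept punctured at the current stage. Hence proper (or smooth) base change applies — the cover map $p$ is a local homeomorphism, so $p_* = p_!$ up to the usual bookkeeping and the Beck–Chevalley squares commute on the nose.

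Carrying this out, the inductive step looks like: $\psi(a_\bullet) = \psi^{a_1}_\emptyset \circ \psi(a_\bullet')$ where $a_\bullet'$ is the flag $a_\bullet$ with $a_1$ removed and reindexed, viewed now as a flag for the nearby-cycles problem on $X^\times(a_1)\to D^\times(a_1)$-worth of data — except one has to be careful that $\psi(a_\bullet')$ in the recursion is built relative to the base $D^\times(a_1)$ rather than a point, so the naive-nearby-cycles conventions match. Writing $\psi^{a_1}_\emptyset = \nu^{a_1}_\emptyset \circ u^{a_1}_\emptyset$ and applying the inductive hypothesis to rewrite $\psi(a_\bullet') \simeq \nu(a_\bullet')\circ u_{n}^{\text{rel}}$ where $u_n^{\text{rel}}$ is the maximal unwinding in the coordinates $[n]\setminus a_1$ (performed over $X^\times(a_1)$), one then commutes $\nu^{a_1}_\emptyset$ past $u_n^{\text{rel}}$ by the base-change step and composes $u^{a_1}_\emptyset$ with $u_n^{\text{rel}}$ into $u_n$ by the transitivity step. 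Tracking the monodromy: $m_n$ restricted to $\ZZ^{a_1}$ is $m^{a_1}_\emptyset$ and to $\ZZ^{[n]\setminus a_1}$ is $m_n^{\text{rel}}$, and the commutation and composition equivalences were all chosen $\ZZ$-equivariantly, so the product of stagewise monodromies matches $m_n$ followed by $\nu(a_\bullet)$.

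The main obstacle I expect is the bookkeeping in the base-change step — not its truth (it is a standard Beck–Chevalley argument for the Cartesian squares in the displayed diagram defining $\psi^{a\cup b}_b$, using that covering maps satisfy $p^*\simeq p^!$ up to shift and that $p_*=p_!$ for such maps in the weakly constructible setting) but making the compatibilities coherent enough that the equivalences compose associatively across all $k$ stages, and simultaneously compatibly with the $\ZZ^n$-actions. One should be slightly careful that the later unwinding, when restricted along $i_X(a,b)$ and extended along $j_X(a,b)$, really is (a base change of) the expected cover; this is where one uses that $X^\times(a\cup b)$ is open in $X(a,b)$ precisely in the $a$-coordinates while the $b$-coordinates stay punctured, so no coordinate appearing in a later unwinding is being specialized here. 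Granting these compatibilities — each of which is an instance of the standard six-functor formalism for local homeomorphisms and Cartesian squares — the induction goes through and delivers the claimed equivalence $\psi(a_\bullet)\simeq \nu(a_\bullet)\circ u_n$ respecting monodromy.
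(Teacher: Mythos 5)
Your overall plan --- peel off the factorization $\psi^{a\cup b}_b = \nu^{a\cup b}_b \circ u^{a\cup b}_b$, migrate the unwinding functors inward past the naive nearby cycles, and then collapse them into $u_n$ via transitivity of covers, all $\ZZ^n$-equivariantly --- is essentially the same strategy the paper follows. The paper packages it as a single identity $u^b_\emptyset\, \nu^{a\cup b}_b\, u^{a\cup b}_b \simeq \nu^{a\cup b}_b\, u^{a\cup b}_\emptyset$ applied successively rather than an explicit induction on flag length, but that is a cosmetic difference.

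There is, however, a genuine gap in your base-change step. You assert that the covers are finite-index, and that for a local homeomorphism $p$ one has $p_* = p_!$ ``up to the usual bookkeeping,'' so the Beck--Chevalley squares commute on the nose. Neither assertion holds here. The covers $p_X(a,b)$ are pulled back from $\tilde D^\times(a) \to D^\times(a)$, the \emph{universal} cover of a product of punctured disks, with deck group $\ZZ^a$; this is an infinite cover. For an infinite cover, $p_!$ (direct sum over fibers) and $p_*$ (product over fibers) genuinely differ, and the square commuting the restriction $i_X(a,b)^*$ to the special fiber with the pushforward $p_*$ along the cover is \emph{not} a formal instance of proper or smooth base change: $i^*$ does not in general commute with the infinite products appearing in $p_*$. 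This is precisely where the real content lies, and the paper isolates it as Lemma~\ref{l:covering bc}: for a covering $p:\tilde Y\to Y$ and a locally closed inclusion $v:V\to Y$, the base-change map $v^*p_*p^*\cF\to p_{V*}\tilde v^*p^*\cF$ is an equivalence \emph{for weakly constructible} $\cF$, because locally $p_*p^*\cF = \prod_S\cF$ is again weakly constructible, so the colimits computing $v^*$ stabilize and pass through the product. Your aside about ``the weakly constructible setting'' suggests you sense that constructibility must enter somewhere, but you attach it to the wrong assertion: constructibility does not make $p_*$ agree with $p_!$; it makes the stalk/pullback of an infinite product behave as expected. The commutation of $p^*$ (rather than $p_*$) with the open pushforward $j_{X*}$ in the left and middle squares is where ordinary smooth base change suffices. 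Without a substitute for Lemma~\ref{l:covering bc}, your induction does not go through.
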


\begin{proof}
For $a, b \subset [n]$ disjoint with $a$ nonempty, we will construct 
an equivalence
\beq\label{eq:inductive step}
\xymatrix{
u_\emptyset^b \nu^{a \cup b}_b u^{a\cup b}_b   \simeq
 \nu^{a\cup b}_b u^{a \cup b}_\emptyset
}
\eeq
compatible with the natural $\ZZ^{a\cup b}$-monodromy actions. 
Applying \eqref{eq:inductive step} successively to the terms of $\psi(a_\bullet)$  results in the asserted equivalence. 

To construct \eqref{eq:inductive step}, consider the diagram with Cartesian squares and vertical covering maps
\beq\label{eq: diag chase}
\xymatrix{
\ar[d]  \tilde D^\times(a) \times \tilde D^\times(b) \ar[r]^-{} &  \ar[d] D^\times(a) \times \tilde D^\times(b)  \ar[r]^-{} \ar[d] &  D(a) \times \tilde D^\times(b)  \ar[d]& \ar[l]_-{} \{0\} \times \tilde D^\times(b) \ar[d] \\
\tilde D^\times(a) \times D^\times(b) \ar[r]^-{p(a, b)} & D^\times(a)  \times D^\times(b)\ar[r]^-{j(a, b)} &  D(a)  \times D^\times(b) & \ar[l]_-{i(a, b)}  \{0\} \times D^\times(b)
}
\eeq
Take the fiber product of \eqref{eq: diag chase} with $f|_{D(a) \times D^\times(b)}$ to obtain a diagram with analogous properties. The sought-after equivalence now results from a  chase in this last diagram. One uses: 1)  standard identities for compositions,  2) smooth base-change for pullback along the vertical  maps  in the left and middle square, and 
3) a special instance of base-change, appearing in Lemma~\ref{l:covering bc} immediately following,  for the pushforward  along the vertical maps  in the right square.

Finally, one can observe the diagrams have compatible deck transformations, hence the constructed equivalence is  compatible with the natural $\ZZ^{a\cup b}$-monodromy actions. 
\end{proof}

\begin{lemma}\label{l:covering bc}
Let $p:\tilde Y \to Y $ be a covering of manifolds, and $v:V\to Y$ the inclusion of a locally closed submanifold. Consider the Cartesian square
$$
\xymatrix{
\ar[d]_-{p_V}\tilde V \ar[r]^-{\tilde v} & \tilde Y\ar[d]_p\\
 V \ar[r]^-{v} & Y 
}
$$
For a weakly constructible complex $\cF\in \Sh(Y)$, with pullback $p^*\cF\in \Sh(\tilde Y)$, the natural base-change map is an equivalence
\beq\label{eq:covering bc map}
\xymatrix{
v^*p_*p^*\cF \ar[r]^-\sim & p_{V*} \tilde v^* p^*\cF
}
\eeq
\end{lemma}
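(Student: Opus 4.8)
The map \eqref{eq:covering bc map} is a morphism in $\Sh(V)$, so it suffices to check it is an equivalence on stalks at each point $y\in V$. The plan is to compute both stalks explicitly, using the local triviality of the covering, and then to commute a filtered colimit past a (possibly infinite) product, this last step being the only place where weak constructibility is used.

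First I would record the shape of the two sides. Since $p\circ\tilde v = v\circ p_V$, we have $\tilde v^* p^*\cF \simeq p_V^* v^*\cF$, so the target of \eqref{eq:covering bc map} is $p_{V*}p_V^* v^*\cF$. For the covering $p$ and the sheaf $p^*\cF$, the stalk $(p_* p^*\cF)_y$ is $\colim_{U\ni y}\Gamma(p^{-1}(U), p^*\cF)$ over open $U\subset Y$; restricting to connected evenly covered $U$ — cofinal, since $Y$ is a locally connected manifold — there is a canonical identification $\Gamma(p^{-1}(U), p^*\cF)\simeq \prod_{f\in p^{-1}(y)}\Gamma(U,\cF)$, the transition maps being, factor by factor, the restriction maps $\Gamma(U,\cF)\to\Gamma(U',\cF)$. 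The same applies to $p_V$ and $p_V^* v^*\cF$, and $p_V^{-1}(y)=p^{-1}(y)$ because $y\in V$. Thus the map \eqref{eq:covering bc map} on stalks at $y$ becomes
$$
\colim_{U\ni y,\ U\subset Y}\ \prod_{f\in p^{-1}(y)}\Gamma(U,\cF)\ \longrightarrow\ \colim_{W\ni y,\ W\subset V}\ \prod_{f\in p^{-1}(y)}\Gamma(W, v^*\cF),
$$
induced by the restriction maps $\Gamma(U,\cF)\to\Gamma(U\cap V, v^*\cF)$.

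Next I would reduce to an essential-constancy statement. In the tame setting, refine the stratification for which $\cF$ is weakly constructible to a Whitney stratification of $Y$ compatible with $V$; then $v^*\cF$ is weakly constructible on $V$. For a weakly constructible sheaf and a fixed point, the sublevel sets $\rho^{-1}[0,\delta)$ of a definable function $\rho$ vanishing only at that point form a cofinal family of neighborhoods, and — by Lemma~\ref{l:trans} together with Thom's first isotopy lemma, exactly as in the proofs of Lemmas~\ref{l: useful 1} and~\ref{l: useful 2} — the restriction maps among them are equivalences for small $\delta$. Hence the system $\{\Gamma(U,\cF)\}_U$ is essentially constant with value $\cF_y$, and likewise $\{\Gamma(W, v^*\cF)\}_W$ is essentially constant with value $(v^*\cF)_y\simeq\cF_y$, compatibly with the restriction maps between the two systems.

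Finally, a filtered colimit of an essentially constant diagram is computed by its common value, and this is preserved by the product $\prod_{f\in p^{-1}(y)}(-)$; so both sides of the displayed map are identified with $\prod_{f\in p^{-1}(y)}\cF_y$ and the map is the identity. I expect the main obstacle to be precisely this interchange of the product over $p^{-1}(y)$ with the colimit: the fibers of $p$ are infinite in the intended applications — they are lattices $\ZZ^a$ — so the identity fails for a general $\cF$, and one genuinely needs the essential constancy furnished by weak constructibility and tameness. Everything else in the argument is formal bookkeeping with smooth base change and the definition of stalks.
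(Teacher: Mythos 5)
Your proposal is correct and takes essentially the same approach as the paper's proof: both reduce the statement to the interchange of the product over the covering fiber with the colimit computing the stalk/pullback, and both justify that interchange by the stabilization (essential constancy) of the colimit diagram coming from weak constructibility and tameness. The paper's version phrases this by first localizing so that $p$ is a trivial covering $Y\times S\to Y$ and then invoking that $\prod_S \cF$ is itself weakly constructible (so its stalk computation stabilizes), whereas you stabilize the colimit for $\cF$ directly and then commute the product past an essentially constant colimit — a difference of packaging, not of substance.
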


\begin{proof}
The assertion is local so it suffices to assume $p:\tilde Y = Y \times S\to Y$ is the projection where $S$ is a discrete space. Then \eqref{eq:covering bc map} takes the form
$$
\xymatrix{
v^*(\prod_S \cF) \ar[r] & \prod_S v^*\cF
}
$$
Since $\cF\in \Sh(Y)$ is weakly constructible, $\prod_S \cF \in \Sh(Y)$ is as well,  hence the colimit diagrams in the calculation of $v^*(\prod_S \cF)$ stabilize. Thus sections of $v^*(\prod_S \cF)$ on opens are simply calculated by  sections of $\prod_S \cF$ on suitable opens, and thus  commute with the product.
\end{proof}

\subsubsection{Lax compatibility of nearby cycles}
 
Now we will relate the various iterated nearby cycles. One could also use the technology of vanishing topoi. 

Let $\Flags([n])$ denote the set of partial flags $a_\bullet$  in $[n]$. 
Equip  $\Flags([n])$  with the partial order $a^1_\bullet \leq  a^2_\bullet$ when $a^1_\bullet$ results from  $a^2_\bullet$
by forgetting some of its subsets. We will regard $\Flags([n])$ as a category with a single morphism $a^1_\bullet \to  a^2_\bullet$
whenever $a^1_\bullet \leq  a^2_\bullet$ and the evident composition.

\begin{lemma}\label{l: naive lax}
The assignment of naive  iterated nearby cycles 
$$
\xymatrix{
\Flags([n]) \ar[r] &  \Fun_{\dgCat_k}(\Sh(X^\times), \Sh(X_0 )) & a_\bullet \ar@{|->}[r] & \nu(a_\bullet)
}
$$
naturally  extends to a map of $\oo$-categories:  
for each $a^1_\bullet \leq  a^2_\bullet$, there is a canonical map of functors
$$
\xymatrix{
r^{a^1_\bullet}_{a^2_\bullet}:\nu(a^1_\bullet)\ar[r] & \nu(a^2_\bullet)  
}
$$
along with coherent compositional identities 
$$
\xymatrix{
r^{a^1_\bullet}_{a^\ell_\bullet} \simeq r^{a^{\ell-1}_\bullet}_{a^\ell_\bullet} \circ \cdots \circ 
r^{a^1_\bullet}_{a^2_\bullet}
}
$$
for chains $a^1_\bullet \leq \cdots \leq a^\ell_\bullet$.
\end{lemma}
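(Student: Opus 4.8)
The plan is to build the lax functor structure by only working with the naive nearby cycles, which are composites of the "geometric" functors $i_X(a,b)^*j_X(a,b)_*$. The key point is that for a single morphism $a^1_\bullet \le a^2_\bullet$ in $\Flags([n])$ — i.e. inserting one more subset $c$ with $a_m \subset c \subset a_{m+1}$ into the flag $a^1_\bullet$ — one has a canonical \emph{unit-of-adjunction} map comparing $\nu^{a_{m+1}}_{a_m}$ with the two-step iteration $\nu^{c}_{a_m} \circ \nu^{a_{m+1}}_{c}$. Concretely, writing $a = c \setminus a_m$, $b = a_{m+1}\setminus c$, $b' = a_m$, the two geometric operations $i_X(a\cup b, b')^*j_X(a\cup b,b')_*$ versus $i_X(a,b')^* j_X(a,b')_* \, i_X(b,b'\cup a)^* j_X(b, b'\cup a)_*$ differ by a base-change comparison map, which is the unit for the adjunction $(i^*j_*) \dashv (\text{something})$ arising from the nested inclusions of the loci $X^\times(a\cup b \cup b') \subset \cdots \subset X^\times(b')$. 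So first I would, for each such elementary insertion, write down this canonical comparison map $r$ as the natural transformation induced by the "partial specialization" factoring $j_X(a\cup b, b')_* \simeq j_X(a,b')_* \circ (\text{stuff})$ through the intermediate locus.

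\textbf{Key steps, in order.} (1) Reduce to elementary insertions: every morphism in $\Flags([n])$ factors into a composite of morphisms that insert a single subset, so it suffices to define $r$ on these and check the cocycle/coherence conditions. (2) For an elementary insertion, produce the comparison map $r$ from the standard factorization of $j_*$ along a composite of open inclusions together with the base-change map for $i^*j_*$ applied to the relevant Cartesian square (the squares are Cartesian by construction, as in \S\ref{ss:hd base}); this is where I invoke the explicit diagrams defining $\nu^{a\cup b}_b$. (3) Upgrade from "a map for each elementary insertion plus 2-cocycle identities" to an honest functor of $\infty$-categories. Here the cleanest route is to exhibit the whole assignment as arising from a single $\infty$-categorical gadget: e.g. present $\nu(a_\bullet)$ as a limit (right Kan extension) of a diagram of functors indexed by the poset of "chopped-up" loci associated to the flag, so that refining the flag is just restricting along a map of index posets and the transformations $r$ are the structure maps of a right Kan extension — these automatically satisfy all coherences. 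Alternatively, use the functoriality of the assignment $U \mapsto (\Sh(U), \text{restriction/corestriction})$ along the poset of locally closed subsets $X^\times(a)$, and realize $\nu(a_\bullet)$ as an iterated composite in a double category of correspondences, where lax functoriality in the flag is formal.

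\textbf{Coherence bookkeeping.} The genuine content beyond the one-step comparison is purely the verification that, given a flag with two subsets $c_1 \subset c_2$ inserted, the two ways of inserting them one at a time (first $c_1$ then $c_2$, or first $c_2$ then $c_1$) give the \emph{same} composite natural transformation, together with the higher analogues. This is where I expect the main obstacle to lie: not in any single map, but in organizing all of them coherently. I would handle it by not verifying identities by hand at all, but by choosing the $\infty$-categorical model in step (3) so that $\nu(-)$ is \emph{manifestly} a functor out of $\Flags([n])$ — i.e. identify a functor $\Flags([n])^{\op} \to \dgCat_k$-valued diagram (or a suitable correspondence category) whose structure maps, by definition, unwind to the $r^{a^1_\bullet}_{a^2_\bullet}$ above. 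The compositional identities $r^{a^1_\bullet}_{a^\ell_\bullet} \simeq r^{a^{\ell-1}_\bullet}_{a^\ell_\bullet} \circ \cdots \circ r^{a^1_\bullet}_{a^2_\bullet}$ are then instances of the functoriality built into that model, and the comparison with the hands-on description of $r$ on elementary insertions is a local check on 2-simplices.
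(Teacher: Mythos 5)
Your construction of the elementary comparison map matches the paper's: the paper also reduces to insertions of a single subset, writes the relevant iterated naive nearby cycles as composites of the geometric functors $i_X(-,-)^*j_X(-,-)_*$, and produces the natural transformation from the composition identities for $j_{X*}$ and $i_X^*$ along a two-step factorization together with the base-change map for the intermediate Cartesian square (the paper's diagram~\eqref{eq:big X diag}). One terminology quibble: this base-change map $i^*j_* \to j'_* i'^*$ is not really ``the unit for the adjunction $(i^*j_*)\dashv\cdots$''; it is the standard Beck--Chevalley comparison, induced by the unit of the $(j'^*,j'_*)$ adjunction, so it would be cleaner to call it that. Where you genuinely diverge is the coherence bookkeeping. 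The paper's proof is quite terse there --- it says only that one can ``assemble a diagram composed of triangles as in~\eqref{eq:big X diag} and similarly assemble natural maps'' --- whereas you propose to bypass pointwise coherence checks by packaging $\nu(-)$ as a right Kan extension indexed by a poset of loci, or as an iterated composite in a double category of correspondences, so that functoriality in $\Flags([n])$ is formal. That is a more systematic route and would produce a cleaner argument if carried out; the trade-off is that you then owe a comparison showing the structure maps of your model unwind to the hands-on base-change maps on elementary insertions, which is where the content would migrate. As written, both your sketch and the paper's remain at roughly the same level of rigor on the higher coherences, so this is a difference in strategy rather than in what is actually proved.
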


\begin{proof} 
We will construct the map $r^{a^1_\bullet}_{a^2_\bullet}$ when $a^1_\bullet \leq a^2_\bullet$ with $\ell(a^2_\bullet) = \ell(a^1_\bullet) + 1$. Coherent compositional identities result from diagram chases  indicated below.

So suppose $a^1_\bullet$, $a^2_\bullet$ are  given respectively by
$$
\xymatrix{
\emptyset  = a_0 \subset a_1 \subset \cdots \subset \wh a_i \subset \cdots \subset a_{k+1} = [n]
&
\emptyset  = a_0 \subset a_1 \subset \cdots \subset a_{k+1} = [n]
}
$$
where $\wh a_i$ means we omit $a_i$ from $a^1_\bullet$.
Thus both functors $\nu(a^1_\bullet)$, $\nu(a^2_\bullet)$ begin with the composition
$$
\xymatrix{
  \nu^{a_{i+2}}_{a_{i+1}}  \circ  \cdots \circ \nu^{a_{k}}_{a_{k-1}}
\circ \nu^{[n]}_{a_k}:
\Sh(X^\times) \ar[r] & \Sh(X^\times(a_{i+1})) 
}
$$ 
and similarly end with the composition
$$
\xymatrix{
  \nu^{a_1}_\emptyset \circ  \nu^{a_2}_{a_{1}}  \circ  \cdots 
 \nu^{a_{i-1}}_{a_{i-2}}:
\Sh(X^\times(a_{i-1})) \ar[r] & \Sh(X_0) 
}
$$

Set $c= a_{i-1}$, $b = a_i \setminus a_{i-1}$, $a = a_{i+1} \setminus a_i$. Then it suffices to give a map of functors
$$
\xymatrix{
\nu^{a \cup b \cup c}_{c}  \ar[r] &
\nu^{b\cup c}_c  \circ \nu^{a \cup b \cup c}_{b \cup c}  :
\Sh(X^\times(a \cup b \cup c)) \ar[r] & \Sh(X^\times(c)) 
}
$$
Returning to the definitions, we seek  a map of functors
$$
\xymatrix{
  i_X(a\cup b, c)^*j_{X}(a \cup b, c)_*       \ar[r] & 
i_X(b, c)^*j_{X}(b, c)_*   i_X(a, b\cup c)^*j_{X}(a, b\cup c)_*  
}
$$

Consider the diagram with Cartesian square
\beq
\label{eq:big diag}
\xymatrix{
\ar[dr]_-{j(a\cup b, c)}D^\times(a) \times D^\times(b) \times D^\times(c) \ar[r]^-{j(a,b\cup c)} & 
\ar[d]^-j D(a) \times D^\times(b) \times D^\times(c) & \ar[l]_-{i(a,b\cup c)} 
\ar[d]^-{j(b, c)} \{0\} \times D^\times(b) \times D^\times(c) \\
& D(a) \times D^\times(b) \times D^\times(c) 
& \ar[l]_-i \{0\} \times D(b) \times D^\times(c) \\
& 
& 
\ar[ul]^-{i(a \cup b, c)} \{0\} \times \{0\} \times D^\times(c) \ar[u]_-{i(b, c)}
}
\eeq

Taking the fiber product of \eqref{eq:big diag}  with  $f:X\to D^n = D([n])$ gives an analogous diagram 
\beq
\label{eq:big X diag}
\xymatrix{
\ar[drr]_-{j_X(a\cup b, c)}X^\times(a) \times X^\times(b) \times X^\times(c) \ar[rr]^-{j_X(a,b\cup c)} && 
\ar[d]^-{j_X} X(a) \times X^\times(b) \times X^\times(c) && \ar[ll]_-{i_X(a,b\cup c)} 
\ar[d]^-{j_X(b, c)} X_0 \times X^\times(b) \times X^\times(c) \\
&& X(a) \times X^\times(b) \times X^\times(c) 
&& \ar[ll]_-{i_X} X_0 \times X(b) \times X^\times(c) \\
&& 
&& 
\ar[ull]^-{i_X(a \cup b, c)} \{0\} \times \{0\} \times D^\times(c) \ar[u]_-{i_X(b, c)}
}
\eeq

 The commutative triangles give composition identities
$$
\xymatrix{
  j_{X}(a \cup b, c)_*   \simeq j_{X *} j_X(a, b \cup c)_*  &
   i_X(a\cup b, c)^* \simeq i(b, c)^* i_X^*
}
$$
Finally, base-change in the upper right square gives the sought-after map of functors
 $$
\xymatrix{
  i_X(a\cup b, c)^*j_{X}(a \cup b, c)_*  \simeq i(b, c)^* i_X^*  j_{X *} j_X(a, b \cup c)_*  
  \ar[r] &   
  i_X(b, c)^*j_{X}(b, c)_*   i_X(a, b\cup c)^*j_{X}(a, b\cup c)_*  
}
$$

To obtain coherent compositional identities, one can assemble a diagram composed of triangles as in~\eqref{eq:big X diag}
and similarly assemble natural maps.
\end{proof}

Finally, Lemmas~\ref{l: unwind} and~\ref{l: naive lax} 
immediately imply the following.

Let $T^n = \RR^n/\ZZ^n$ be the $n$-torus. 
Let $\Sh(X_0 )^{\otimes T^n}$ denote the category of objects $\cF\in  \Sh(X_0 )$ together with a map
$\ZZ^n \to \Aut(\cF)$.

\begin{prop}\label{prop: lax}

The assignment of  iterated nearby cycles 
$$
\xymatrix{
\Flags([n])  \ar[r] & \Fun_{\dgCat_k}(\Sh(X^\times), \Sh(X_0 )^{\otimes T^n})
&
a_\bullet  \ar@{|->}[r] & \psi(a_\bullet)
}
$$ naturally extends to a map of $\oo$-categories. 
\end{prop}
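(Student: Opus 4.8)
The plan is to deduce the proposition formally from Lemmas~\ref{l: unwind} and~\ref{l: naive lax}, exactly as the sentence preceding the statement promises. The one genuinely useful observation is that the torus factor can be pushed past the inner $\Fun$: since $\Sh(X_0)^{\otimes T^n}$ is by definition the $\oo$-category of objects of $\Sh(X_0)$ equipped with a $\ZZ^n$-action, i.e.\ $\Fun(B\ZZ^n, \Sh(X_0))$, currying gives a canonical equivalence
$$
\Fun_{\dgCat_k}(\Sh(X^\times), \Sh(X_0)^{\otimes T^n}) \simeq \Fun_{\dgCat_k}(\Sh(X^\times), \Sh(X_0))^{\otimes T^n}.
$$
Hence producing a map of $\oo$-categories $\Flags([n]) \to \Fun_{\dgCat_k}(\Sh(X^\times), \Sh(X_0)^{\otimes T^n})$ is the same as producing a map $\Flags([n]) \times B\ZZ^n \to \Fun_{\dgCat_k}(\Sh(X^\times), \Sh(X_0))$.

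First I would construct this last map as a composite. Lemma~\ref{l: naive lax} supplies a map of $\oo$-categories $\Flags([n]) \to \Fun_{\dgCat_k}(\Sh(X^\times), \Sh(X_0))$, $a_\bullet \mapsto \nu(a_\bullet)$. The monodromy $m_n\colon \ZZ^n \to \Aut(u_n)$ of the maximal unwinding is precisely the datum of a functor $B\ZZ^n \to \Fun_{\dgCat_k}(\Sh(X^\times), \Sh(X^\times))$ with value $u_n$. Feeding the product of these two maps into the composition functor
$$
\circ\colon \Fun_{\dgCat_k}(\Sh(X^\times), \Sh(X_0)) \times \Fun_{\dgCat_k}(\Sh(X^\times), \Sh(X^\times)) \to \Fun_{\dgCat_k}(\Sh(X^\times), \Sh(X_0))
$$
yields the desired map $\Flags([n]) \times B\ZZ^n \to \Fun_{\dgCat_k}(\Sh(X^\times), \Sh(X_0))$, sending $(a_\bullet, g)$ to $\nu(a_\bullet)\circ u_n$ with $g$ acting through $\nu(a_\bullet)(m_n)$. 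Transposing, one obtains a map of $\oo$-categories $\Flags([n]) \to \Fun_{\dgCat_k}(\Sh(X^\times), \Sh(X_0)^{\otimes T^n})$; note that the transition maps $r^{a^1_\bullet}_{a^2_\bullet}$ of Lemma~\ref{l: naive lax}, whiskered with $u_n$, automatically commute with the monodromy by the interchange law for horizontal composition, so no further compatibility needs to be checked by hand.

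It then remains to identify the values of this map with the iterated nearby cycles. By Lemma~\ref{l: unwind} — including its assertion that the equivalence respects the $\ZZ^n$-monodromy — the object $\nu(a_\bullet)\circ u_n$ together with the action $\nu(a_\bullet)(m_n)$ is canonically equivalent, as an object of $\Fun_{\dgCat_k}(\Sh(X^\times), \Sh(X_0)^{\otimes T^n})$, to $\psi(a_\bullet)$ together with its monodromy $m(a_\bullet)$. Hence the map of $\oo$-categories constructed above has value $\psi(a_\bullet)$ on $a_\bullet$, which is exactly the assertion.

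I expect the only friction to be notational bookkeeping: making the exchange equivalence above precise and verifying that the composition functor carries the two lax/equivariant structures to the intended one. The potential \emph{real} obstacle — needing the Lemma~\ref{l: unwind} equivalences $\psi(a_\bullet) \simeq \nu(a_\bullet)\circ u_n$ to be natural in $a_\bullet$ and compatible with the lax structure of Lemma~\ref{l: naive lax} — is sidestepped by \emph{defining} the map of $\oo$-categories to be $\nu(-)\circ u_n$ (manifestly a map of $\oo$-categories) and invoking Lemma~\ref{l: unwind} only pointwise to recognize its values. Should one instead insist that the transition maps be literally the monodromy-refined versions of the base-change maps built in Lemma~\ref{l: naive lax}, one would rerun that lemma's diagram chases $\ZZ^n$-equivariantly, using, as already noted there, that every arrow involved is a composition identity or a base-change map and that these commute with the deck transformations.
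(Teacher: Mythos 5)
Your approach is precisely the one the paper intends: the paper offers no proof beyond the remark that Lemmas~\ref{l: unwind} and~\ref{l: naive lax} immediately imply the proposition, and your argument combines exactly those two lemmas, defining the functor as $\nu(-)\circ u_n$ (with monodromy via whiskering $m_n$) and identifying its values with $\psi(a_\bullet)$ pointwise. The currying step, the interchange-law remark, and especially the care to invoke Lemma~\ref{l: unwind} only objectwise are all legitimate ways of making the paper's terse claim precise.
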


Our main result, presented in the next section, gives a  criterion for when the maps in the functor of Proposition~\ref{prop: lax}, as constructed by base-change in Lemma~\ref{l: naive lax}, are equivalences.


\section{Microlocal criterion}


The aim of this section is to state and prove our main result Theorem~\ref{thm: main}. 

  \subsection{Hypotheses}\label{s:hypo}
  
  We detail here the hypotheses that go into  Theorem~\ref{thm: main}.

\subsubsection{Non-characteristic Lagrangians}
  
   To a map of manifolds  $f:X\to Y$, we have the  Lagrangian correspondence between cotangent bundles
$$
\xymatrix{
T^*Y & \ar[l]_-p f^*(T^*Y)  \ar[r]^-{i} & T^*X
}
$$
where $f^*(T^*Y) = T^*Y \times_{Y} X$ is the pulled back bundle, 
 $p$ is the evident projection, and $i$ is the pullback of covectors. 
 
%

\begin{defn} Let $\Lambda \subset T^*X$ be a subset.

 For a map $f:X\to Y$, we say that $\Lambda$ is {\em $f$-non-characteristic} if the intersection 
$$ \Lambda \cap i(p^{-1}(T^*Y))
$$ lies in the zero-section of $T^*X$.
\end{defn}

\begin{remark}
For $Z \subset X$ a submanifold, the conormal bundle $T^*_Z X \subset T^*X$ is $f$-non-characteristic  if and only if 
the restriction $f|_Z:Z \to Y$ is a submersion.
\end{remark}

\subsubsection{Thom condition}\label{sss:thom}

Now suppose the map of manifolds $f:X\to Y $ is a submersion. Then the pullback of covectors  $i:f^*(T^*Y)\to T^*X $ is injective, and we have a short exact exact sequence of vector bundles
$$
\xymatrix{
0 \ar[r] & f^*(T^*Y)  \ar[r]^-{i} & T^*X \ar[r]^-\Pi  & T^*_f \ar[r] & 0
}
$$
where $T^*_f$ is the relative cotangent bundle. Note for any $y\in Y$, with fiber $X_y = f^{-1}(y)$, we have a canonical identification $T^*_f|_{X_y}  \simeq T^*X_y$.

Given  a submersion $f:X\to Y$, and a  subset $\Lambda \subset T^*X$, we refer to its image $\Lambda_f = \Pi(\Lambda) \subset T^*_f$ in the relative cotangent bundle as  the {\em $f$-projection} of $\Lambda$. 
 We will be interested in the closure
of the $f$-projection
$$
 \ol\Lambda_f \subset T^*_f
 $$  
 in particular
 its restriction to   fibers $X_y$, for $y\in Y$, which we denote by
 $$
\xymatrix{
 \ol \Lambda_{f, y} = \ol\Lambda_f \times_Y {X_y} \subset T^*_f|_{X_y} \simeq T^* X_y
 }
 $$

%

%
%
%
%

\begin{defn}  Let $\Lambda \subset T^*X$ be a conic Lagrangian, and
 $f:X\to Y$ a submersion.

We say that $\Lambda$ is {\em $f$-Thom} at a point $y\in Y$
 if  the restriction 
of the closure of the $f$-projection  $\ol\Lambda_{f, y}  \subset T^*X_y$ is isotropic.

We say that $\Lambda$ is  $f$-Thom  if it is $f$-Thom at all $y\in Y$.

\end{defn}

\begin{remark}\label{rem:unclosed}
For $\Lambda \subset T^*X$ a conic Lagrangian, and $y\in Y$, the (not necessarily closed) restriction of the $f$-projection
$$
\xymatrix{
  \Lambda_{f, y} = \Lambda_f \times_S {X_y} \subset T^*_f|_{X_y} \simeq T^* X_y 
 }
 $$
is always isotropic. To see this, consider the inclusion of a fiber $X_y \subset X$, for $y\in Y$, and the associated Lagrangian correspondence
$$
\xymatrix{
T^*X & \ar[l]_-p T^*X|_{X_y}  \ar[r]^-{i} & T^*X_y
}
$$
Then alternatively we have $\Lambda_{f, y} = i(p^{-1}(\Lambda))$, hence it is isotropic by general considerations about Lagrangian correspondences. 
\end{remark}

\begin{ex}

We mention here a useful situation where one can verify the $f$-Thom condition. We thank Misha Finkelberg for bringing this application to our attention. Suppose $g:G\to Y$  is a smooth group-scheme acting on $f:X\to Y$. Let 
$\frg^*\to Y$ be the dual of Lie algebra, and $\mu:T^*_f \to \frg^*$ the moment map for the induced action.

Suppose $G_y$ acts on $X_y$ with discretely many orbits. Then $\mu^{-1}(0)|_y\subset T^*_f|_y \simeq T^*X_y$ is
the union of the conormals to the orbits and in particular Lagrangian. 

Let $\Lambda \subset T^*X$ be a conic Lagrangian  such that $\Lambda_f \subset T^*_f$ lies in $\mu^{-1}(0) \subset T^*_f$. For example,  $\Lambda$ could be the singular support of a sheaf  $\cF\in \Sh(X)$ that is locally constant along the $G$-orbits. Then we immediately have that $\Lambda$ is  $f$-Thom at $y$ since $\ol \Lambda_{f, y} \subset \mu^{-1}(0)|_y$ which is Lagrangian. 

\end{ex}

Before continuing on, 
let us mention a traditional source of Lagrangians satisfying the Thom condition. 

Recall (see ~\cite{Math}) Thom's condition $A_f$ (relative Whitney condition $A$) for a map of manifolds $f:X\to Y$.
Given
 submanifolds $X_0, X_1 \subset X$ with $f|_{X_0}, f|_{X_1}$ of constant rank, one says that the pair $(X_0, X_1)$ satisfies  condition $A_f$ at a point $x_0\in X_0$ if the following holds.
 Let $x_i \in X_1$  be a sequence  of points  converging to $x_0$ such that the sequence of planes $\ker((df|_{X_1})_{x_i}) \subset T_{x_i} X_1$ converges to a plane $\tau \subset T_{x_0} X$ in the Grassmannian of the tangent bundle. Then $\ker((df|_{X_0})_{x_0})  \subset \tau$.
 One says the pair $(X_0, X_1)$ satisfies condition $A_f$ if it satisfies condition $A_f$ at all points  $x_0\in X_0$.
 
Now suppose $f:X\to Y$ is a submersion.  Then by  passing from relative tangent subspaces to relative conormal quotient spaces, 
we may reinterpret condition $A_f$  in the following form.  

\begin{lemma}\label{l:reform thom}
The pair $(X_0, X_1)$ satisfies condition $A_f$ if and only if 
$$
\xymatrix{
\ol{\Pi(T^*_{X_1} X)}|_{X_0} \subset \Pi(T^*_{X_0} X)
}
$$
\end{lemma}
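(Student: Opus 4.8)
The statement is essentially an unwinding of the definition of condition $A_f$ in terms of covectors instead of tangent vectors, using the duality between the relative tangent and cotangent bundles of the submersion $f$. The plan is to translate each side of the claimed containment into a statement about limits of planes in the relative tangent bundle $T_f$, and observe that the two translations agree with the original definition of $A_f$.

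First I would set up the dual pictures: for the submersion $f:X\to Y$ we have the short exact sequence $0 \to f^*(T^*Y) \to T^*X \xrightarrow{\Pi} T^*_f \to 0$ and dually $0 \to T_f \xrightarrow{\Pi^*} TX \to f^*(TY) \to 0$. For a submanifold $Z \subset X$ on which $f|_Z$ has constant rank, I would first reduce to the case where $f|_Z$ is actually a submersion (the case of interest: along the condition $A_f$ story $X_0, X_1$ have $f$ of constant rank, and one can pass to the fibers, but since the statement as phrased asks for the conormal-side reformulation it is cleanest to note that $\Pi(T^*_Z X)$ is exactly the annihilator, inside $T^*_f$, of the subbundle $\ker((df|_Z)) \cap T_f = T_f \cap TZ$ — i.e. the "relative tangent space to $Z$"). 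Concretely, at a point $z\in Z$, a relative covector $\eta \in T^*_f|_z$ lifts to an element of $T^*_Z X$ iff $\eta$ annihilates the image in $T_f|_z$ of $T_z Z$; so $\Pi(T^*_Z X)|_z = \big(\, \text{im}(T_z Z \to T_f|_z)\,\big)^{\perp}$. Thus $\Pi(T^*_{X_0} X)|_{x_0}$ is the annihilator of $V_0 := \text{im}(T_{x_0}X_0 \to T_f|_{x_0})$, which under the identification $T_f|_{x_0} \simeq \ker((df|_{X_0})_{x_0})$-complement is the "relative tangent plane" appearing in condition $A_f$.

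Next I would analyze the left-hand side $\ol{\Pi(T^*_{X_1} X)}\,|_{X_0}$. A point of this set over $x_0$ is a limit $\lim_i \eta_i$ where $\eta_i \in \Pi(T^*_{X_1}X)|_{x_i}$ and $x_i \to x_0$ with $x_i \in X_1$. By the previous paragraph $\eta_i$ annihilates $V_1^{(i)} := \text{im}(T_{x_i}X_1 \to T_f|_{x_i})$. By a compactness/Grassmannian argument (passing to a subsequence) the planes $V_1^{(i)}$ — equivalently the planes $\ker((df|_{X_1})_{x_i})$ inside $T_{x_i}X_1$ — converge to some plane, and the limit covector $\eta_\infty$ annihilates the limit plane $\tau$ (the pairing between $T^*_f$ and $T_f$ is continuous, and "annihilate" is a closed condition). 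So membership of $\eta_\infty$ in $\Pi(T^*_{X_0}X)|_{x_0}$ — i.e. $\eta_\infty \in V_0^{\perp}$ — is implied as soon as $V_0 \subset \tau$ for every such limit plane $\tau$; and conversely, if $V_0 \not\subset \tau$ for some limiting $\tau$, one can produce a covector annihilating $\tau$ but not $V_0$, hence a point of the closure not lying in $\Pi(T^*_{X_0}X)|_{x_0}$. Matching "$V_0 \subset \tau$ for all limiting $\tau$" with the definition of $A_f$ (which says exactly $\ker((df|_{X_0})_{x_0}) \subset \tau$ for every such limit, after identifying $V_0$ with the relative tangent plane) finishes both directions.

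**Main obstacle.** The genuinely delicate point is the passage between "limits of relative covectors annihilating $V_1^{(i)}$" and "covectors annihilating the limit $\tau$ of the $V_1^{(i)}$". Forward ($\subset$): one must check that every convergent sequence of covectors with $\eta_i \perp V_1^{(i)}$ has limit $\perp \lim V_1^{(i)}$ — clean, since the incidence variety $\{(V,\eta): \eta \perp V\}$ in $\mathrm{Gr}\times T^*_f$ is closed. Reverse ($\supset$): given $\eta_\infty \perp \tau$ with $\tau = \lim V_1^{(i)}$, one must realize $\eta_\infty$ as a limit of $\eta_i \perp V_1^{(i)}$; this requires choosing, for each $i$, a covector annihilating $V_1^{(i)}$ close to $\eta_\infty$, which is possible because the annihilator bundle $\{\eta \perp V\}$ over the (locally compact) Grassmannian is a vector bundle and hence admits local continuous sections through any prescribed limiting value — so a small perturbation/continuity argument does it. One also has to be slightly careful that the constant-rank hypotheses on $f|_{X_0}, f|_{X_1}$ ensure $V_0, V_1^{(i)}$ have constant dimension so the Grassmannian bookkeeping is uniform; this is exactly where the hypothesis is used. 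Everything else is bookkeeping with the exact sequences above.
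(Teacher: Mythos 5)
Your proof is correct and follows essentially the same route as the paper: identify $\Pi(T^*_Z X)$ with the annihilator in $T^*_f$ of $\ker(df|_Z) = TZ \cap T_f$, then translate the closure/limit conditions back and forth across this duality, matching them against the definition of condition $A_f$. Your explicit treatment of the ``reverse'' step (producing a sequence $\eta_i \perp V_1^{(i)}$ converging to a given $\eta_\infty \perp \tau$, via continuity of the annihilator bundle over the Grassmannian) is a slightly more careful rendering of a point the paper's proof passes over quickly, but the underlying argument is the same.
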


\begin{proof}
Recall we write $T_f = \ker(df)$ for the relative tangent bundle, dual to the relative cotangent bundle $T^*_f$. 
Then inside the dual relative bundles, we have $\ker(df|_{X_0})^\perp = \Pi(T^*_{X_0} X)$, $\ker(df|_{X_1})^\perp = \Pi(T^*_{X_1} X)$. 

Suppose given the ingredients in the definition of conditions $A_f$. 
If  $\ol{\Pi(T^*_{X_1} X)}|_{X_0} \subset \Pi(T^*_{X_0} X)$, then 
$  \lim_{x_i} \Pi(T^*_{X_1} X)_{x_i}  \subset \Pi(T^*_{X_0} X)_{x_0}$, and so 
$ \lim_{x_i} \ker(df|_{X_1})_{x_i} \supset \ker(df|_{X_0})_{x_0}$.

Conversely, for any point $(x_0, \xi_0) \in \ol{\Pi(T^*_{X_1} X)}|_{X_0}$, by definition $x_0 = \lim x_i$ for some $x_i \in X_1$. By passing to a subsequence, we may assume $\ker((df|_{X_1})_{x_i}) \subset T_{x_i} X_1$ converges to a plane $\tau \subset T_{x_0} X$. If $\ker((df|_{X_0})_{x_0})  \subset \tau$, then there are $(x_i, \xi_i) \in \Pi(T^*_{X_1} X)_{x_i}$ arbitrarily close to $(x_0, \xi_0)$.
\end{proof}

\begin{prop}\label{p:thom=thom}
Let $f:X\to Y$ be a submersion  with compatible stratifications $\frX, \frY$. 

If each pair of strata of $\frX$ satisfies condition $A_f$, then the conormal Lagrangian $\Lambda_{\frX} = \bigcup_i T^*_{X_i} X$ is $f$-Thom.
\end{prop}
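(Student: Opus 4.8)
The plan is to prove the slightly stronger statement that, under the hypothesis, the $f$-projection $\Lambda_f := \Pi(\Lambda_{\frX}) \subseteq T^*_f$ is already \emph{closed}; the $f$-Thom condition at every point of $Y$ then follows formally. Indeed, once $\ol{\Lambda_f} = \Lambda_f$, then for any $y\in Y$ we get $\ol{\Lambda_f}\times_Y X_y = \Lambda_f \times_Y X_y = (\Lambda_{\frX})_{f,y}$, and $(\Lambda_{\frX})_{f,y}$ is isotropic in $T^*X_y$ by Remark~\ref{rem:unclosed} (applicable since $\Lambda_{\frX}$ is a closed conic Lagrangian, being a locally finite union of the conormal Lagrangians $T^*_{X_i}X$). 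So $\Lambda_{\frX}$ is $f$-Thom at $y$ for every $y$, as desired.

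\emph{Step 1: reduce to a single stratum.} Since $\Pi$ is a bundle map, $\Lambda_f = \bigcup_i \Pi(T^*_{X_i}X)$, and since each $\Pi(T^*_{X_i}X)$ lies over $X_i$ while $\{X_i\}$ is locally finite in $X$, the family $\{\Pi(T^*_{X_i}X)\}$ is locally finite in $T^*_f$; hence $\ol{\Lambda_f} = \bigcup_i \ol{\Pi(T^*_{X_i}X)}$. It therefore suffices to show $\ol{\Pi(T^*_{X_i}X)} \subseteq \Lambda_f$ for each $i$.

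\emph{Step 2: stratify the closure of one conormal projection.} Because $\frX,\frY$ are compatible, $f|_{X_i}$ has constant rank, so $\Pi(T^*_{X_i}X) = \ker(df|_{X_i})^\perp$ is a subbundle of $T^*_f|_{X_i}$, in particular locally closed in $T^*_f$; since it lies over the locally closed set $X_i$, its closure in $T^*_f$ meets $T^*_f|_{X_i}$ in exactly itself. By the frontier condition for the Whitney stratification $\frX$, the set $\ol{X_i}$ is a union of strata, and $\ol{\Pi(T^*_{X_i}X)}$ lies over $\ol{X_i}$; thus $\ol{\Pi(T^*_{X_i}X)} = \Pi(T^*_{X_i}X) \cup \bigcup_{X_j \subseteq \ol{X_i},\, j\neq i} \big(\ol{\Pi(T^*_{X_i}X)}|_{X_j}\big)$. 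For each such $X_j$, the pair $(X_j, X_i)$ satisfies condition $A_f$ by hypothesis, so Lemma~\ref{l:reform thom} gives $\ol{\Pi(T^*_{X_i}X)}|_{X_j} \subseteq \Pi(T^*_{X_j}X) \subseteq \Lambda_f$. Combining these inclusions yields $\ol{\Pi(T^*_{X_i}X)} \subseteq \Lambda_f$, and hence $\ol{\Lambda_f} = \Lambda_f$ by Step 1. This completes the argument together with the reduction in the first paragraph.

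The step I expect to be the main obstacle is Step 2's bookkeeping: checking carefully that compatibility of the stratifications really does make $\Pi(T^*_{X_i}X)$ a locally closed (constant-rank) subbundle, so that passing to the closure adds nothing over $X_i$ itself, and that the frontier condition pairs each limit stratum $X_j \subseteq \ol{X_i}$ with exactly the pair $(X_j, X_i)$ to which one applies Lemma~\ref{l:reform thom}. The remaining manipulations — images and closures commuting with locally finite unions and with restriction to fibers of $T^*_f$ — are routine.
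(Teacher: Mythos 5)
Your proof is correct and takes essentially the same approach as the paper: the paper's (very terse) argument likewise uses Lemma~\ref{l:reform thom} together with the $A_f$ hypothesis to reduce to showing that each $\Pi(T^*_{X_i}X)|_{X_y}$ is isotropic, which it then deduces from Remark~\ref{rem:unclosed}. Your write-up is simply a fuller unpacking of the same steps — in particular making explicit that the $f$-projection $\Lambda_{\frX,f}$ is already closed — and the bookkeeping you flag as a potential obstacle does go through as you describe.
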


\begin{proof}
By Lemma~\ref{l:reform thom} it suffices to see $\Pi(T^*_{X_i} X)|_{X_y}$, for $y\in Y$, is isotropic. This  always holds as explained in Remark~\ref{rem:unclosed}. 
\end{proof}

\begin{remark}\label{rem:gv}
Conversely, we have the following which we learned from Misha Grinberg and Kari Vilonen.
Set $U = X \setminus X_y$.
Given a conic Lagrangian  $\Lambda \subset T^*U$, we can always find  submanifolds $U_i\subset U$ such that $\Lambda \subset \bigcup_i \ol{T^*_{U_i} U}$. Then one observes $\Lambda$ is $f$-Thom at $y$ if and only if there exists a stratification $\frV = \{V_j\}$ of  $X_y$ such that each pair $(U_i, V_j)$ satisfies condition $A_f$. If the pairs also satisfy Whitney's condition $B$ then one can follow~\cite{Le} and invoke Thom's second isotopy lemma to obtain our main result. Grinberg and Vilonen have pointed out that  in fact Thom's second isotopy lemma may only require Whitney's condition $B$ for pairs mapping to the same stratum of the base. If so, then one can obtain our main result with this method, though our arguments are far more elementary. 
\end{remark}

Finally, let us point out  that the Thom condition is essentially about higher-dimensional bases.

\begin{prop}\label{p:dim 1}
If $f:X\to Y$ is a submersion with $\dim Y = 1$, then any conic Lagrangian $\Lambda\subset T^*X$ is $f$-Thom.

\end{prop}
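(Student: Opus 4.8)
The statement to prove is Proposition~\ref{p:dim 1}: if $f:X\to Y$ is a submersion onto a one-dimensional base and $\Lambda\subset T^*X$ is a conic Lagrangian, then $\Lambda$ is $f$-Thom, i.e. for every $y\in Y$ the fiber $\ol\Lambda_{f,y}\subset T^*X_y$ of the closure of the $f$-projection is isotropic. The plan is a dimension count. Fix $y\in Y$ with fiber $X_y=f^{-1}(y)$, so $\dim X_y=\dim X-1$. Since $\Lambda$ is Lagrangian in $T^*X$, we have $\dim\Lambda=\dim X$. The $f$-projection $\Pi(\Lambda)\subset T^*_f$ has $\dim\Pi(\Lambda)\le\dim\Lambda=\dim X$, and since $\dim Y=1$ the fiber $\Pi(\Lambda)_y=\Pi(\Lambda)\times_Y X_y$ has dimension at most $\dim X-1=\dim X_y$; moreover it is isotropic by Remark~\ref{rem:unclosed}. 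Taking closures does not increase dimension, so $\ol\Lambda_{f,y}$ is a closed conic subset of $T^*X_y$ of dimension $\le\dim X_y$.

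The remaining point is to upgrade ``dimension $\le\dim X_y$'' plus ``closure of an isotropic set'' to ``isotropic.'' The key observation is that $\ol\Lambda_{f,y}$ is the closure of the isotropic (in fact, coisotropic-complement, i.e. isotropic) conic set $\Lambda_{f,y}=\Pi(\Lambda)_y$, which by Remark~\ref{rem:unclosed} is obtained from $\Lambda$ via a Lagrangian correspondence and hence isotropic. I would argue: an isotropic subset of a symplectic manifold of the ambient half-dimension whose closure has no larger dimension is again isotropic, because any locally closed submanifold $Z\subset\ol\Lambda_{f,y}$ of top dimension $\dim X_y$ must, on a dense open subset, lie in (the smooth locus of) $\Lambda_{f,y}$ where $\omega=0$ by genericity of dimension; and any stratum of lower dimension $<\dim X_y$ sits inside a conic set of dimension $\le\dim X_y$, which after a standard argument (a conic isotropic set's closure, being constructible of bounded dimension, decomposes into finitely many submanifolds on each of which $\omega$ vanishes by continuity from the dense isotropic part) again carries vanishing symplectic form. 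Concretely, one stratifies $\ol\Lambda_{f,y}$ compatibly with the dense open subset $\Lambda_{f,y}$; on the open strata $\omega=0$ by Remark~\ref{rem:unclosed}, and on the boundary strata $\omega=0$ follows by a limiting argument using that a conic Lagrangian's boundary in a cotangent bundle is automatically isotropic once the dimension is right.

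Actually the cleanest route avoids even this and reduces everything to a cotangent-bundle fact: a closed conic subset of $T^*X_y$ of dimension $\le\dim X_y$ that is the closure of a conic isotropic set is isotropic. Alternatively, and perhaps most simply, I would invoke the following: the $f$-projection $\Pi$ restricted to $\Lambda$, near a generic point of each component, is generically submersive onto its image or drops rank, and in the former case $\Pi(\Lambda)$ is Lagrangian in $T^*_f$ (dimension $\dim X-1$... but $\dim T^*_f=2\dim X_y=2\dim X-2$, and $\dim\Lambda=\dim X$, which is $\le\dim X_y$ exactly when $\dim X\le 2\dim X_y$, i.e. $\dim X\ge 2$ — and when $\dim X=1$ the fiber $X_y$ is a point so the statement is trivial). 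Thus $\Pi(\Lambda)$ is an isotropic conic subset of $T^*_f$ of dimension $\le\dim X_y=\tfrac12\dim T^*_f$, so it is ``Lagrangian-or-smaller,'' and its closure in the conic category is again such; restricting to the fiber over $y$ — which is all of $T^*_f$ already since $Y$ is a point's worth of directions modulo... here one uses $\dim Y=1$ crucially, so $T^*_f|_{X_y}=T^*X_y$ with no loss — gives the claim.

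\textbf{Main obstacle.} The genuine content, and the step I expect to require the most care, is the passage from the (always isotropic) non-closed $f$-projection $\Lambda_{f,y}$ to its closure $\ol\Lambda_{f,y}$: establishing that \emph{closing up} a conic isotropic set does not destroy isotropy, given the dimension bound coming from $\dim Y=1$. In higher-dimensional $Y$ this fails (that is precisely the point of the Thom hypothesis and the blow-up non-example in the introduction), so the argument must genuinely use $\dim Y=1$ — concretely, that $\dim\Lambda=\dim X$ forces $\dim\Lambda_{f,y}\le\dim X_y$, leaving no room for the closure to pick up a non-isotropic top-dimensional piece, while lower-dimensional boundary pieces are handled by a continuity/limiting argument (or by the general fact that a closed conic constructible subset of $T^*X_y$ of dimension $\le\dim X_y$ which is the closure of an isotropic set is isotropic, itself provable by Whitney-stratifying compatibly with the isotropic dense part and checking $\omega|_Z=0$ stratum by stratum).
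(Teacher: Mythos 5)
Your proof has a genuine gap, and it sits precisely at the step you flag as the ``main obstacle.'' The problem is a misidentification of what $\ol\Lambda_{f,y}$ is. You write that ``$\ol\Lambda_{f,y}$ is the closure of the isotropic conic set $\Lambda_{f,y}$,'' but by definition $\ol\Lambda_{f,y} = \ol{\Pi(\Lambda)} \times_Y X_y$: one first closes up the \emph{entire} $f$-projection $\Pi(\Lambda) \subset T^*_f$ over all of $Y$, and then restricts to the fiber over $y$. This set is in general \emph{strictly larger} than $\ol{\Lambda_{f,y}}$: it picks up covectors that are limits of $\Pi(\lambda_n)$ with $\lambda_n \in \Lambda$ living over nearby parameters $y_n \to y$, $y_n \neq y$. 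The paper's own punctured-parabola remark (just after the introduction of the $f$-projection) illustrates exactly this: there $\Lambda_{f,0} = \emptyset$ since $Y$ avoids $\{x=0\}$, yet $\ol\Lambda_{f,0} = T^*_0 X_0$ is the whole cotangent fiber. Indeed, if $\ol\Lambda_{f,y}$ really were $\ol{\Lambda_{f,y}}$, then the proposition would be true for bases of \emph{any} dimension — the closure of an isotropic subanalytic set is automatically isotropic under the paper's definition (take the same dense locally closed submanifold) — contradicting the blow-up non-example in \S\ref{s:intro main result}. So the hypothesis $\dim Y = 1$ must be used to control those limit covectors from nearby fibers, and your dimension count does not do that.

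Even granting the dimension bound $\dim \ol\Lambda_{f,y} \leq \dim X_y$ (which does hold, though the case of components of $\ol{\Pi(\Lambda)}$ lying entirely over $X_y$ needs the isotropy of $\Lambda_{f,y}$ to get the bound, not the fibers-of-$\Pi$ argument), a half-dimensional conic subset of $T^*X_y$ is not automatically isotropic — for instance $\{q_1 = q_2,\ p_1 = p_2\} \subset T^*\RR^2$ is conic, half-dimensional, and not isotropic. The genuine content is that limits at $y$ of the isotropic $\Lambda_{f,y'}$ land in something isotropic, and that is a Thom-type statement. The paper proves it by choosing Whitney stratifications $\frX, \frY$ compatible with $f$ with $\Lambda \subset \Lambda_{\frX} = \bigcup_i T^*_{X_i} X$, reducing via Proposition~\ref{p:thom=thom} to checking Thom's condition $A_f$ for each pair of strata, and then invoking \cite[Th\'eor\`eme 4.2.1]{BMM} together with the topological triviality of Whitney stratifications (Thom's first isotopy lemma) — this is where $\dim Y = 1$ enters essentially. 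If you try to make your suggested ``Whitney-stratify compatibly and check $\omega|_Z = 0$ stratum by stratum'' argument precise, you will be forced to address exactly the boundary strata arising as limits from nearby fibers, and you will land back on this stratification-theoretic input.
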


\begin{proof}
Choose stratifications $\frX, \frY$ compatible with $f$ so that $\Lambda \subset  \Lambda_\frX = \bigcup_i T^*_{X_i} X$. By Prop.~\ref{p:thom=thom}, it suffices to check each pair of strata  of $\frX$ satisfies condition $A_f$.  
This follows immediately from~\cite[Th\'eor\`eme 4.2.1]{BMM} and the  fact that Whitney stratifications are topologically trivial by Thom's first isotopy lemma.
\end{proof}

  
  \subsection{Main result}

Now we are ready to state and prove our main result.   We return to the constructions and notation
of Section~\ref{ss:hd base}. 

Recall 
given a map of complex manifolds $f :X\to D^n$, 
 we write $X^\times = f^{-1}((D^\times)^n)$, $X_0 = f^{-1}(0)$.
Recall by Proposition~\ref{prop: lax},
iterated nearby cycles, with their monodromy, gives a map of $\oo$-categories
$$
\xymatrix{
\psi:\Flags([n])  \ar[r] & \Fun_{\dgCat_k}(\Sh(X^\times), \Sh(X_0 )^{\otimes T^n})
}
$$ 
where $\Flags([n])$ denotes the category of partial flags in $[n]$, 
and $\Sh(X_0 )^{\otimes T^n}$ the category of complexes $\cF\in \Sh(X_0)$ equipped with a map $\ZZ^n\to \Aut(\cF)$.

\begin{theorem}\label{thm: main}

Let $\Lambda \subset T^*X^\times$ be a closed conic Lagrangian, and $f :X\to D^n$ a submersion.

Suppose  $\Lambda$ is (i) $f$-non-characteristic and (ii) $f$-Thom at the origin $0\in D^n$

Let $\Sh_\Lambda(X^\times) \subset \Sh(X^\times)$ be the full subcategory of complexes with singular support contained in $\Lambda$.

Then the restricted map of $\oo$-categories
$$
\xymatrix{
\psi:\Flags([n])  \ar[r] & \Fun_{\dgCat_k}(\Sh_\Lambda(X^\times), \Sh(X_0 )^{\otimes T^n})
}
$$ 
lands in the full sub-groupoid of functors and invertible natural transformations. Concretely,
for any $\cF\in \Sh_\Lambda(X^\times)$, and
each pair of partial flags $a^1_\bullet \leq  a^2_\bullet$ in $[n]$, the canonical map of
iterated nearby cycles functors
$$
\xymatrix{
\psi(a^1_\bullet)(\cF)\ar[r] & \psi(a^2_\bullet)(\cF)
}
$$
is an equivalence.
\end{theorem}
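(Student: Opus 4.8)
\emph{Overview of the plan.} The plan is to strip the problem down, in two reduction steps, to a single application of the non-characteristic propagation lemma of Kashiwara--Schapira~\cite{KS} to a continuous family of tube-shaped open sets, in which the two hypotheses play cleanly separated roles: the $f$-non-characteristic condition controls the ``cylindrical'' faces separating the coordinates into blocks, while the $f$-Thom condition at the origin controls the way the tube degenerates onto the special fiber $X_0$.

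\emph{Step 1: isolate the unwinding and reduce to a single insertion.} By Lemma~\ref{l: unwind} there is a monodromy-equivariant equivalence $\psi(a_\bullet)\simeq\nu(a_\bullet)\circ u_n$, and chasing its construction one sees it is compatible with the lax structures of Lemma~\ref{l: naive lax} and Proposition~\ref{prop: lax}. Since $p_n$ is a covering, $u_n\cF$ is again weakly constructible and $\ssupp(u_n\cF)\subseteq\ssupp(\cF)\subseteq\Lambda$ (locally $u_n\cF$ is an infinite product of translates of $\cF$, and such a product remains microsupported in $\Lambda$, by the same stabilization phenomenon used in the proof of Lemma~\ref{l:covering bc}). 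Hence it suffices to prove that for every $\cG\in\Sh_\Lambda(X^\times)$ the structure maps $r^{a^1_\bullet}_{a^2_\bullet}\colon\nu(a^1_\bullet)\cG\to\nu(a^2_\bullet)\cG$ of Lemma~\ref{l: naive lax} are equivalences; the monodromy-equivariant and $\oo$-groupoid refinements then follow formally, since a functor of $\oo$-categories inverting every morphism lands in the maximal sub-$\oo$-groupoid. By the compositional coherence in Lemma~\ref{l: naive lax} we may assume $a^1_\bullet\leq a^2_\bullet$ differ by the insertion of a single subset.

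\emph{Step 2: local tube description of the stalks.} Fix $x_0\in X_0$; as $f$ is a submersion we may work in a chart where $f$ is the projection $\CC^N\times\CC^n\to\CC^n$. Unwinding the definition $\nu^{a\cup b}_b=i_X(a,b)^*j_X(a,b)_*$ and applying Lemmas~\ref{l: useful 1} and~\ref{l: useful 2} to the functions $\sum_{i\in a_j\setminus a_{j-1}}|z_i|^2$ attached to the successive blocks of the flag (over a relatively compact neighborhood of $x_0$, so Milnor radii can be chosen uniformly), the stalk of $\nu(a_\bullet)\cG$ at $x_0$ is computed by sections of $\cG$ over a region
$$W(a_\bullet)=\{\,|x-x_0|<\delta\,\}\cap\bigcap_{i=1}^{n}\{\,0<|z_i|<\epsilon_i\,\},$$
with the radii $\epsilon_i$ nested according to $a_\bullet$ (outer nearby-cycle blocks get larger radii; the trivial flag corresponds, after a further harmless deformation, to all $\epsilon_i$ of comparable size $\sim\delta$). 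Under these identifications the map $r^{a^1_\bullet}_{a^2_\bullet}$ becomes the restriction of sections between two such tube regions that differ only in the relative size of the radii attached to two adjacent blocks. So it suffices to show that this restriction map is an equivalence when the radii of one block are rescaled from comparable with, down to much smaller than, those of the neighboring block.

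\emph{Step 3: non-characteristic propagation, and the main obstacle.} Connect the two tubes by performing this rescaling continuously in $t$, giving a family of opens $W^t\subset X$. By the non-characteristic propagation lemma of Kashiwara--Schapira~\cite{KS}, $\Gamma(W^t,\cG)$ is locally constant in $t$ once $\Lambda=\ssupp(\cG)$ meets the outward conormal Lagrangian $\Lambda_{W^t}$ only along the zero section. The faces of $W^t$ are of three kinds. (a) The cylindrical faces $\{|z_i|^2=\mathrm{const}\}$ (and their mutual corners): their conormals lie in $df^*(f^*(T^*\CC^n))$, hence meet $\Lambda$ only in the zero section by the $f$-non-characteristic hypothesis. (b) The outer spherical face $\{|x-x_0|=\delta\}$ and its corners with (a): since $\Lambda$ is isotropic, a Sard-type argument yields a generic and (in the relatively compact region) uniform $\delta$ for which $\Lambda$ is transverse to it. (c) The ``missing'' faces towards the loci $\{z_i=0\}$, and in particular the way $W^t$ accumulates onto $X_0$ near $x_0$ as the inner radii shrink --- this is where the $f$-Thom condition at the origin enters. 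It forces $\ol\Lambda_{f,0}=\ol{\Pi(\Lambda)}|_{X_0}$ to be isotropic, equivalently makes the iterated $j_*$-extensions appearing in $\nu(a_\bullet)$ have singular support remaining isotropic along $X_0$, so that the limiting conormal directions of the pinching Milnor tube at $x_0$ avoid $\Lambda$ and the propagation is not obstructed as $t$ runs over the whole interval. Granting (a)--(c), $\Gamma(W^t,\cG)$ is constant, so $r^{a^1_\bullet}_{a^2_\bullet}$ is an equivalence on stalks, hence an equivalence. The crux of the argument is point (c): turning the $f$-Thom condition at the origin into the assertion that the singular support does not obstruct deforming the tube as it degenerates onto $X_0$. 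This is the microlocal substitute for the use of Thom's second isotopy lemma in L\^e's stratified argument (cf.\ Remark~\ref{rem:gv}); parts (a) and (b) are comparatively routine, being precisely what the $f$-non-characteristic hypothesis and genericity of the outer radius are designed to supply.
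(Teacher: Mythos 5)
Your Steps 1 and 2 match the paper's strategy exactly: reduce to naive nearby cycles by factoring out the unwinding, then compute stalks as sections over compact tube-like regions (the paper works with closed faces $P(a_\bullet, \frr)$ of a fixed submanifold with corners inside $(D^\times)^n$, rather than open nested tubes, but the two pictures are interchangeable here). The gap is in Step 3, where your case split (a)/(b)/(c) departs from what actually makes the proof work. Your (a) is correct and is the same as the paper's observation that the $f$-non-characteristic hypothesis kills any obstruction whose covector vanishes in the $x$-directions, forcing all obstructions to occur at the vertical boundary $S(\frz)\times P_t$ with radial $\xi_x$.

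Your (b), however, cannot be right as stated. If a Sard-type genericity argument could by itself produce a $\delta$ for which the outer sphere contributes no obstruction (for all the relevant $t$ and $\frr$), then combined with (a) the propagation would already be unobstructed and the $f$-Thom hypothesis would never be used --- but the paper's blow-up non-example (the end of \S\ref{s:intro main result}) shows $f$-Thom is essential. The subtlety is that the range of ``good'' $\delta$ depends on $\frr$, and nothing prevents it from shrinking to nothing as $\frr\to 0$; ruling this out is precisely what $f$-Thom is for. Correspondingly your (c) is too vague: the Thom hypothesis is not used to control singular support of $j_*$-extensions, nor is there a genuine ``missing face'' at $\{z_i=0\}$ (the tube stays inside $(D^\times)^n$). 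What the paper actually does is a unified contradiction argument in which (b) and (c) are inseparable: suppose obstructions (necessarily radial, by $f$-non-characteristic) persist for all small $\frz$ and then all small $\frr$. Taking limits $\frr\to 0$ produces, for each $\frz$, a radial codirection in $\ol\Lambda_{f}|_{z=0}$ at a base point with $|x|=\frz$; then taking $\frz\to 0$ and applying curve selection yields a curve of radial codirections in $\ol\Lambda_{f,0}$ with base points tending to $0$. Such a curve cannot be isotropic (the Liouville form $\xi\,dx$ does not vanish on it), contradicting the $f$-Thom hypothesis. This interplay --- not a genericity argument at the outer sphere plus a separate statement about the pinching --- is the crux of the proof.
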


\begin{proof} 
Observe  that the universal unwinding functor 
$$
\xymatrix{
u_n = p_{n*} p_n^*:\Sh(X^\times) \ar[r] &  \Sh(X^\times)
}
$$ evidently preserves singular support since $p_n:\tilde X^\times \to X^\times$ is a cover. Following the construction of  Proposition~\ref{prop: lax}, it thus suffices to prove the theorem for   the map of $\oo$-categories given by iterated naive nearby cycles
$$
\xymatrix{
\nu:\Flags([n]) \ar[r] &  \Fun(\Sh_\Lambda(X^\times), \Sh(X_0 )) 
}
$$
 In other words,
for
each pair of flags $a^1_\bullet \leq  a^2_\bullet$ in $[n]$, it suffices to show the canonical map of
naive nearby cycles functors
$$
\xymatrix{
r^{a^1_\bullet}_{a^2_\bullet}:\nu(a^1_\bullet)\ar[r] & \nu(a^2_\bullet)  
}
$$
is an equivalence when evaluated on objects of $\Sh_\Lambda(X^\times)$.


%
%
%

To check when   $r^{a^1_\bullet}_{a^2_\bullet}$ is an equivalence, it suffices to check on stalks. So we may work locally and assume our given map is the projection $f:X = \CC^m \times D^n\to D^n$. We will unwind the constructions and concretely calculate $r^{a^1_\bullet}_{a^2_\bullet}$ on the stalk at the origin $0\in \CC^m$.
For $\frz> 0 $, we write $B(\frz)  = \{x\in \CC^m \,|\,  |x|^2\leq \frz^2 \}\subset \CC^m$ for the closed ball of radius $\frz$
around the origin, and $S(\frz) = \partial B(\frz) =  \{x\in \CC^m \,|\,  |x|^2= \frz^2 \}\subset \CC^m$ for the sphere of radius $\frz$.

%
%

For each $a\subsetneq [n]$, 
consider the function
  $$
\xymatrix{
r_a:D^n \ar[r] & \RR_{\geq 0} & r_a(z) = \sum_{i \in [n] \setminus a} z_i^2
}
$$

Consider a collection $\frr = (\frr_a)_{a\subsetneq [n]}$ of positive constants.
When choosing them, we will always select $\frr_{a_1}$ before $\frr_{a_2}$ when  $a_1 \subsetneq a_2$, in particular we will always arrange $\frr_{a_2} \ll \frr_{a_1}$ when $a_1 \subsetneq a_2$.
We will say $\frr = (\frr_a)_{a\subsetneq [n]}$ is  sufficiently small for an event  to hold when for any such sequence of sufficiently small choices of  $\frr_a$, for $a\subsetneq [n]$,  the event holds.

Given  a collection $\frr = (\frr_a)_{a\subsetneq [n]}$ of positive constants, 
define the open tube and closed boundary 
$$
\xymatrix{
T_a(\frr_a) = \{ z\in D^n \, |\,  r_a(z) < \frr_a^2 \}
&
S_a(\frr_a) = \partial T_a(\frr_a) =  \{ z\in D^n \, |\, r_a(z) = \frr_a^2\}
}
$$
 Note we may choose $\frr = (\frr_a)_{a\subsetneq [n]}$ sufficiently small so that the submanifolds  $S_a(\frr_a) \subset D^n$,  for $a\subsetneq [n]$, form a transverse collection. We will assume we have done so in what follows.

Consider the compact submanifold with corners defined by
$$
\xymatrix{
P(\frr) = S_\emptyset(r_\emptyset) \setminus (S_\emptyset(r_\emptyset) \bigcap  (\bigcup_{\emptyset \subsetneq a \subsetneq [n]} T_a(\frr_a)))
}
$$
Note that $P(\frr)$  lies in $(D^\times)^n \subset D^n$, and
 its faces  are naturally indexed by partial flags $a_\bullet$ in $[n]$. Namely, to a partial flag  $a_\bullet$ given by
$
\emptyset  = a_0 \subset a_1 \subset \cdots \subset a_{k+1} = [n],
$
we have the closed face
$$
\xymatrix{
P(a_\bullet, \frr) = P(\frr) \cap (\bigcap_{i = 1}^k S_{a_i}(\frr_i)) 
}
$$
 In particular, $P(\frr)$ itself is indexed by the partial flag $\emptyset = a_0 \subset a_1 = [n]$, and its $n!$ many dimension zero corners  are  indexed by complete flags
$
\emptyset  = a_0 \subset a_1 \subset \cdots \subset a_{n} = [n].
$ 
Note  in turn each closed face
$
P(a_\bullet, \frr)  
$
is itself a submanifold with corners, with† closed faces $P(a'_\bullet, \frr)$, for $a'_\bullet \geq a_\bullet$.

The following  holds without assuming the non-characteristic or Thom hypotheses of the theorem. 

\begin{lemma} Fix a weakly constructible complex $\cF\in \Sh(\CC^m \times D^n)$.
 
For sufficiently small $\frz>0$, and then sufficiently small $\frr = (\frr_a)_{a\subsetneq [n]}$, there is a natural identification of the stalk of the $a_\bullet$-iterated naive nearby cycles at the origin
$$
\xymatrix{
\nu(a_\bullet)(\cF)|_0 \ar[r]^-\sim  & \Gamma(B(\frz) \times { P(a_\bullet, \frr)}, \cF) 
}$$
for any partial flag $a_\bullet$ in $[n]$.

Moreover, for any pair of partial flags  $a^1_\bullet \leq a^2_\bullet$, the natural maps form a commutative square
$$
\xymatrix{
\ar[d]_-{r^{a^1_\bullet}_{a^2_\bullet}|_0} \nu(a^1_\bullet)(\cF)|_0 \ar[r]^-\sim  & \Gamma(B(\frz) \times { P(a^1_\bullet, \frr)}, \cF) \ar[d]^-\rho
\\
\nu(a^2_\bullet)(\cF)|_0 \ar[r]^-\sim  & \Gamma(B(\frz) \times { P(a^2_\bullet, \frr)}, \cF) 
}
$$
where $\rho$ is restriction along the inclusion $B(\frz) \times {P(a^2_\bullet, \frr)} \subset B(\frz) \times {P(a^1_\bullet, \frr)}$.
\end{lemma}

\begin{proof}

Without loss of generality, we may work with the partial flag  $a_\bullet$ given by
$
\emptyset  = [n_0] \subset [n_1] \subset \cdots \subset [n_k] \subset  [n_{k+1}] = [n]
$
with $0 = n_0 <n_1 <\cdots < n_k < n_{k+1} = n$.

The proof will be an inductive application of Lemmas~\ref{l: useful 1} and ~\ref{l: useful 2}. We work throughout in the manifold $M = \CC^n \times D^n$, and with the initial compact subset $X = \{|x|^2 \leq 1, |z|^2 \leq 1\}$. Fix an initial stratification $\frX$ of $X$ so that $\cF|_X$ is locally constant along the strata. Given a subset $X \subset M$
and stratification $\frX$, at each application of Lemma~\ref{l: useful 1} to a function $f:M\to \RR_{\geq 0}$, we will continue the proof working in the subset $X\cap \{f\geq \epsilon\}$ with the induced stratification $\frX \cap  \{f\geq \epsilon\}$. 
Similarly, given a subset $X \subset M$, at each application of Lemma~\ref{l: useful 2} to a function $f:M\to \RR_{\geq 0}$, we will continue the proof working in the subset $X\cap \{f= \epsilon\}$ with the induced stratification $\frX \cap  \{f= \epsilon\}$. 

First, choose $\frz>0$ sufficiently small as in Lemma~\ref{l:trans}.

Step (0) (a)  Apply Lemma~\ref{l: useful 2} to $r_\emptyset$. 

Step (0) (b)   Consider the poset $\frP(\emptyset, [n_1])$ of all $\emptyset \subsetneq a \subsetneq [n_1]$ with partial order given by inclusion.
Following the partial order of $\frP([0], [n_1])$,  
 apply Lemma~\ref{l: useful 1} sequentially to $r_{[a]}$, for $a\in \frP(\emptyset, [n_1])$.

Step (1) (a)   Apply Lemma~\ref{l: useful 2} to $r_{[n_1]}$. 

Step (1) (b)  Consider the poset $\frP([n_1], [n_2])$ of all $[n_1] \subsetneq a \subsetneq [n_2]$ with partial order given by inclusion.
Following the partial order of $\frP([n_1], [n_2])$,  
 apply Lemma~\ref{l: useful 1} sequentially to $r_{[a]}$, for $a\in \frP([n_1], [n_2])$.

Continue in this way until the following final step.

Step ($n_k$) (a) Apply Lemma~\ref{l: useful 2} to $r_{[n_k]}$. 

Step ($n_k$) (b) Consider the poset $\frP([n_k], [n[)$ of all $[n_k] \subsetneq a \subsetneq [n]$
 with partial order given by inclusion.
Following the partial order of $\frP([n_k], [n])$,  
 apply Lemma~\ref{l: useful 1} sequentially to $r_{[a]}$, for $a\in \frP([n_k], [n])$.

The asserted equivalence follows immediately from the lemmas.  The commutativity is straightforward to verify by  tracing through the constructions.
\end{proof}

By the lemma, we must show restriction along the inclusion $B(\frz) \times P(a^2_\bullet, \frr) \subset B(\frz) \times P(a^1_\bullet, \frr)$ is an equivalence on sections 
\beq\label{eq: restriction of sects}
\xymatrix{
\rho: \Gamma( B(\frz) \times P(a^1_\bullet, \frr), \cF) \ar[r]^-\sim & 
\Gamma(B(\frz) \times P(a^2_\bullet, \frr), \cF) 
}
\eeq

Note there is a natural monotonic family of submanifolds with corners $P_t \subset D^n$, for $t\in [1, 2]$, with the properties: (i)
$P_1 = P(a^1_\bullet, \frr)$, $P_2 = P(a^2_\bullet, \frr)$, and (ii) $\bigcap_{t\in [1,2]} = P_1$, and $\bigcup_{t\in [1,2]} = P_2$. 
We seek to show the sections $\Gamma( B(\frz) \times P_t, \cF)$, for $t\in [0,1]$, are locally constant, or in other words, propagate with respect to $t \in [1,2]$

Consider the outward conormal Lagrangians 
$$\Lambda_{t} = \Lambda_{B(\frz) \times P_t} \subset T^* \CC^m \times D^n
$$ 
For a sheaf  $\cF\in \Sh_\Lambda(\CC \times D^n)$, †by non-characteristic propagation, obstructions to the propagation of sections  with respect to $t \in [1,2]$ can only occur when 
\beq\label{eq: event}
\Lambda^\oo_{P_t}  \cap \Lambda^\oo \not = \emptyset
\eeq

Recall we assume  $\Lambda$ is non-characteristic for the projection $f:\CC^m \times D^n \to D^n$, in the sense that it contains no covectors of the form $(x, 0; z, \xi_z) \in T^*\CC^m \times T^*D^n$. Thus \eqref{eq: event}  can only happen at covectors of the form $\lambda = (x, \xi_x; z, \xi_z)$ with $\xi_x \not = 0$, or in other words,  covectors that project under 
$$
\xymatrix{
\Pi:  T^*\CC^m \times T^*D^n \to  T^*\CC^m \times D^n = T^*_f & \Pi(x, \xi_x; z, \xi_z) = (x, \xi_x; z) 
}
$$
to non-zero covectors   in the relative cotangent bundle. 
Moreover, 
such covectors $\lambda = (x, \xi_x; z, \xi_z)$ with $\xi_x \not = 0$ only appear  
in $\Lambda_{P_t}$ along the ``vertical boundary" $ S(\frz) \times P_t$ where $|x|^2 = \frz^2$. 

Thus if we never encounter  a codirection $\lambda^\oo \in \Lambda^\oo_{P_t}  \cap \Lambda^\oo$  represented by a covector
$ (x, \xi_x; z, \xi_z) \in T^*\CC^m \times T^*D^n$ with $\xi_x \not = 0$, then \eqref{eq: restriction of sects} is an equivalence by non-characteristic propagation, and we are done. Otherwise, let us write $\lambda^\oo(\frz, \frr)  \in T^\oo_f$ for  the codirection through the image $\Pi (x, \xi_x; z, \xi_z) = (x, \xi_x; z) \in T^*_f$. Note that $\lambda^\oo(\frz, \frr)$ points in the radial codirection $d|x|^2 \subset T^*_x \CC^m$ at its base point $x\in \CC^m$.

 Now suppose we   fix $\frz> 0 $, but continue to encounter such obstructions $\lambda^\oo(\frz, \frr)$ as we take $\frr\to 0$, in particular $r_\emptyset \to 0$. Then, after  possibly passing to a subsequence, we obtain a limit   in the closure of the $f$-projection
 $$
 \xymatrix{
 \lambda^\oo(\frz) \subset \ol\Lambda_f|_{z = 0}
 }
 $$
 Note that $\lambda^\oo(\frz)$ points in the radial codirection $d|x|^2 \subset T^*_x \CC^m$ at its base point $x\in \CC^m$.
 
Finally, suppose we continue to encounter such obstructions as we then take $\frz\to 0$.  
By curve selection, we obtain a curve of radial codirections $\lambda^\oo(\frz) \subset \ol\Lambda_\pi |_{z=0}$, for $\frz \in (0, \epsilon)$.
Such a curve of radial codirections can not have base points tending to the origin and also be isotropic. Thus we conclude 
$ \ol\Lambda_f|_{z = 0}$ is not isotropic. This contradicts the $f$-Thom assumption, and so we could not have encountered the assumed obstructions.
\end{proof}

%
%

Finally, let us state without going into detailed arguments a natural generalization of Theorem~\ref{thm: main} which follows immediately from its repeated application.

For each $a\subset [n]$,  set $f(a) = f|_{X(a)}: X(a) \to  D(a)$ to be the restriction. 
 For each $a' \subset a \subset [n]$,
 let $\pi(a', a): D(a) \to D(a  \setminus a')$ denote the natural projection. 
 Set $g(a', a) = \pi(a', a)\circ f(a) : X(a) \to D(a) \to  D(a\setminus a')$ to be the composition  so that $X(a')= g(a', a)^{-1}(0) \subset X(a)$.

Suppose for each $a\subset [n]$, we have a closed conic Lagrangian $\Lambda(a) \subset T^* X^\times(a)$ that  is $f(a)$-non-characteristic. Suppose   for each $a' \subset a \subset [n]$,  the closure of the $g(a', a)$-projection of $\Lambda(a)$
  satisfies the  compatibility
  $$
\xymatrix{
  \ol{\Lambda(a)}_{g(a', a)}|_{X^\times(a')} \subset \Lambda(a') 
}  $$
It is not difficult to check (see~\cite{NS}) this implies the nearby cycles  $\psi^{a}_{a'}$ preserves such singular support subsets. Moreover,  following Remark~\ref{rem:unclosed}, it also implies $\Lambda(a)$ is $g(a', a)$-Thom at the origin $0\in  D(a\setminus a')$.

 Let $\Subsets([n])$ denote the category with objects subsets $a\subset [n]$ and morphisms $a\to a'$ for  inclusions $a' \subset a$.  Note if we take the nerve of $\Subsets([n])$, we recover $\Flags([n])$ as the simplices of the nerve beginning at $[n]$ and ending at $\emptyset$.
  
  By repeated applications of Theorem~\ref{thm: main}, one can obtain the following. 
\begin{theorem}\label{thm: main univ}
With the above setup, the assignment of  sheaves and nearby cycles provides a map of $\oo$-categories
$$
\xymatrix{
\Subsets([n])  \ar[r] & \dgCat_k
&
a  \ar@{|->}[r] & \Sh_{\Lambda(a)}(X^\times(a))
&
(a' \subset a) \ar@{|->}[r] &  \psi^{a}_{a'}
}
$$ 
\end{theorem}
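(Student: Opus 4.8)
The plan is to reduce the statement to an iterated application of Theorem~\ref{thm: main}, organized along the combinatorics of $\Subsets([n])$. The first step is to observe that the hypotheses are designed precisely so that each single nearby cycles functor $\psi^a_{a'}$ (for $a' \subset a$) makes sense as a functor $\Sh_{\Lambda(a)}(X^\times(a)) \to \Sh_{\Lambda(a')}(X^\times(a'))$: indeed, by the stated compatibility $\ol{\Lambda(a)}_{g(a',a)}|_{X^\times(a')} \subset \Lambda(a')$ and the known behavior of singular support under the four operations $i^*, j_*, p_*, p^*$ appearing in the definition of $\psi^a_{a'}$ (see~\cite{NS}), the functor $\psi^a_{a'}$ carries objects with singular support in $\Lambda(a)$ to objects with singular support in $\Lambda(a')$. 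The second step is to note that for a chain $a'' \subset a' \subset a$, the composite $\psi^{a'}_{a''} \circ \psi^a_{a'}$ and the ``direct'' functor $\psi^a_{a''}$ fit into a lax-compatibility diagram exactly as in Lemma~\ref{l: naive lax} and Proposition~\ref{prop: lax} — one simply runs that construction with the factorizations of $g(a'',a)$ through $g(a',a)$ and $g(a'',a')$ — yielding the data of a functor out of the nerve of $\Subsets([n])$.

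The core input is then that these lax-compatibility maps are in fact equivalences when restricted to the subcategories $\Sh_{\Lambda(a)}(X^\times(a))$. This is where Theorem~\ref{thm: main} enters: fixing a chain $a'' \subset a' \subset a$ and writing $g = g(a'',a): X(a) \to D(a\setminus a'')$, the product decomposition $D(a\setminus a'') = D(a'\setminus a'') \times D(a\setminus a')$ (with $X^\times(a')$ the relevant partial special fiber) exhibits the comparison map $\psi^a_{a''} \to \psi^{a'}_{a''}\circ \psi^a_{a'}$ as a special case of the comparison map $\psi(a^1_\bullet) \to \psi(a^2_\bullet)$ between two iterated nearby cycles over the base $D(a\setminus a'')$, associated to the two partial flags $\{\emptyset \subset a\setminus a''\}$ and $\{\emptyset \subset a\setminus a' \subset a\setminus a''\}$. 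The hypotheses ``$\Lambda(a)$ is $g(a'',a)$-non-characteristic'' and ``$\Lambda(a)$ is $g(a'',a)$-Thom at the origin'' — the latter following from the closure compatibility as indicated in the excerpt via Remark~\ref{rem:unclosed} — are exactly conditions (i) and (ii) of Theorem~\ref{thm: main}, so that theorem gives the desired equivalence.

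Finally, having shown that all the structure maps are equivalences, one assembles the data into a genuine functor of $\oo$-categories $\Subsets([n]) \to \dgCat_k$. Here one uses that $\Subsets([n])$ is the poset whose nerve recovers $\Flags([n])$ (as noted just before the statement): the coherence of the composition isomorphisms — i.e.\ that the $r$-maps satisfy the cocycle conditions up to coherent homotopy — is furnished by the same diagram-chase bookkeeping that produced the coherent compositional identities $r^{a^1_\bullet}_{a^\ell_\bullet} \simeq r^{a^{\ell-1}_\bullet}_{a^\ell_\bullet}\circ\cdots\circ r^{a^1_\bullet}_{a^2_\bullet}$ in Lemma~\ref{l: naive lax}, now carried out over the various bases $D(a\setminus a'')$ simultaneously. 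I expect the main obstacle to be purely organizational rather than conceptual: verifying that the singular-support bounds are preserved \emph{compatibly} across all composites (so that the lax functor of Proposition~\ref{prop: lax}, when restricted, actually factors through the assigned subcategories at every stage, not just termwise), and bundling the resulting pointwise equivalences into a coherent $\oo$-functorial package without recomputing the homotopies by hand. Since each individual comparison is handled by Theorem~\ref{thm: main} and each coherence is handled by the machinery already set up in Lemma~\ref{l: naive lax}, no new geometric input is needed — hence the remark in the excerpt that the result ``follows immediately from repeated application.''
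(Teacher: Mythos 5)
Your plan is correct and matches what the paper intends by ``by repeated applications of Theorem~\ref{thm: main}'' — the paper in fact gives no details beyond that one sentence, so your write-up is a reasonable reconstruction of the argument. The three ingredients you identify are exactly the right ones: (1) the hypotheses $\ol{\Lambda(a)}_{g(a',a)}|_{X^\times(a')} \subset \Lambda(a')$ guarantee that each $\psi^a_{a'}$ restricts to a functor $\Sh_{\Lambda(a)}(X^\times(a)) \to \Sh_{\Lambda(a')}(X^\times(a'))$, as indicated in the paper via \cite{NS}; (2) the lax-functor structure and its coherences are produced by the same base-change diagrams as Lemma~\ref{l: naive lax} and Proposition~\ref{prop: lax}, run over $D(a\setminus a'')$ rather than $D^n$; (3) the comparison maps are inverted by Theorem~\ref{thm: main}, whose hypotheses over the smaller base $D(a\setminus a'')$ follow from the stated compatibilities (non-characteristicity because pulling back covectors along $g(a'',a) = \pi(a'',a)\circ f(a)$ yields a subset of those pulled back along $f(a)$; the Thom condition because $\ol{\Lambda(a)}_{g(a'',a)}|_{X^\times(a'')}$ is contained in the Lagrangian $\Lambda(a'')$, hence isotropic).

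One small slip in the bookkeeping: when you identify the comparison $\psi^a_{a''} \to \psi^{a'}_{a''} \circ \psi^a_{a'}$ with a map $\psi(a^1_\bullet)\to\psi(a^2_\bullet)$ over the base $D(a\setminus a'')$, the longer flag should be $\{\emptyset \subset a'\setminus a'' \subset a\setminus a''\}$, not $\{\emptyset \subset a\setminus a' \subset a\setminus a''\}$. In the paper's convention the flag element records the \emph{non-zeroed} components at that stage: the intermediate object $\Sh(X^\times(a'))$ lives over $D^\times(a')$, and within the ambient index set $a\setminus a''$ the non-zeroed components are precisely $a'\setminus a''$. (You already name $X^\times(a')$ correctly as the intermediate fiber; only the label on the flag is off.) This does not affect the validity of the argument, just the translation between the two notational systems.
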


\begin{remark}
Restricting to chains of morphisms beginning at $[n]$ and ending at $\emptyset$,  one recovers
Theorem~\ref{thm: main} from Theorem~\ref{thm: main univ}.
\end{remark}

\subsection{Real version} For our intended application in~\cite{NYverlinde}, we record here a  version of our  main result in a real rather than complex setting. The arguments can be repeated verbatim simply understanding coordinates to be real rather than complex.

To state it, let us return to the constructions and notation
of Section~\ref{ss:hd base} but with the change that $D= (-1, 1)$ will now denote a real interval, and
$D^\times = D\setminus \{0\}$ the punctured real interval. We assume given a submersion $f: X\to D^n$ of real manifolds, and work exclusively with the naive nearby cycles 
 $$
\xymatrix{
\nu^{a \cup b}_b:\Sh(X^\times(a \cup b )) \ar[r] & \Sh(X^\times (b) ) & \nu^{a \cup b}_b = i_X(a, b)^*j_{X}(a, b)_*
}
$$
since there is no monodromy to consider. Then as in Proposition~\ref{prop: lax},
iterated naive nearby cycles gives a map of $\oo$-categories
$$
\xymatrix{
\psi:\Flags([n])  \ar[r] & \Fun_{\dgCat_k}(\Sh(X^\times), \Sh(X_0 ))
}
$$ 
Note one typically applies the above to sheaves supported on the totally positive quadrant $X^{>0} = f^{-1}((0, 1)^n) \subset X^\times = f^{-1}(  (D^\times)^n)$.

We can repeat the definitions of Section~\ref{s:hypo} to say whether a
 closed conic  Lagrangian $\Lambda \subset T^*X^\times$ is  $f$-non-characteristic and $f$-Thom at the origin $0\in D^n$. 
 
 Note the proof of  Theorem~\ref{thm: main} first separates out the monodromy then is primarily devoted to the naive nearby cycles. In the real setting, we can dispense with the arguments  about the monodromy, and simply repeat verbatim the arguments about the naive nearby cycles. In this way we  obtain the following real version.

\begin{theorem}\label{thm: main real}

Let $\Lambda \subset T^*X^\times$ be a closed conic Lagrangian, and $f :X\to D^n$ a submersion.

Suppose  $\Lambda$ is (i) $f$-non-characteristic and (ii) $f$-Thom at the origin $0\in D^n$

Let $\Sh_\Lambda(X^\times) \subset \Sh(X^\times)$ be the full subcategory of complexes with singular support contained in $\Lambda$.

Then the restricted map of $\oo$-categories
$$
\xymatrix{
\nu:\Flags([n])  \ar[r] & \Fun_{\dgCat_k}(\Sh_\Lambda(X^\times), \Sh(X_0 ))
}
$$ 
lands in the full sub-groupoid of functors and invertible natural transformations. Concretely,
for any $\cF\in \Sh_\Lambda(X^\times)$, and
each pair of partial flags $a^1_\bullet \leq  a^2_\bullet$ in $[n]$, the canonical map of
iterated nearby cycles functors
$$
\xymatrix{
\psi(a^1_\bullet)(\cF)\ar[r] & \psi(a^2_\bullet)(\cF)
}
$$
is an equivalence.
\end{theorem}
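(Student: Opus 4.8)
The plan is to deduce Theorem~\ref{thm: main real} from the proof of Theorem~\ref{thm: main} essentially for free, by observing that the only place complex coordinates were used in that argument was to set up the monodromy structure (the universal cover $p_n$, the deck group $\ZZ^n$, the unwinding functor $u_n$), and that in the real setting this entire apparatus is absent: $D = (-1,1)$ and $D^\times = D\setminus\{0\}$ has two contractible components rather than a $\ZZ$-worth of monodromy, so there is no covering to unwind and no $T^n$-action to track. Concretely, I would first note that the combinatorial package of Section~\ref{ss:hd base} — the diagrams \eqref{eq:prod diag}, \eqref{eq:next prod diag}, the $a\cup b\rightsquigarrow b$ naive nearby cycles $\nu^{a\cup b}_b = i_X(a,b)^* j_X(a,b)_*$, the iterated naive nearby cycles $\nu(a_\bullet)$, and the lax functoriality $r^{a^1_\bullet}_{a^2_\bullet}$ of Lemma~\ref{l: naive lax} — is purely topological and makes sense verbatim with $D$ a real interval, since $j_X$ is still an open inclusion and $i_X$ a closed inclusion of the appropriate fibers. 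Likewise Proposition~\ref{prop: lax} degenerates to the stated assignment $\psi:\Flags([n]) \to \Fun_{\dgCat_k}(\Sh(X^\times), \Sh(X_0))$, with no monodromy decoration.

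Next I would carry over the hypotheses. The definitions of Section~\ref{s:hypo} — $f$-non-characteristic, the $f$-projection $\Lambda_f = \Pi(\Lambda)$, its closure $\ol\Lambda_f$, the fiber $\ol\Lambda_{f,y}$, and the $f$-Thom condition — refer only to the real cotangent geometry of the real manifold $X$, and so apply unchanged; indeed $X^\times = f^{-1}((D^\times)^n)$ and $X_0 = f^{-1}(0)$ are honest real manifolds whose cotangent bundles carry their standard symplectic structure, and singular support $\ssupp(\cF)\subset T^*X^\times$ of Kashiwara--Schapira is a real notion. So the statement of Theorem~\ref{thm: main real} is well-posed, and what must be checked is that for $\cF\in\Sh_\Lambda(X^\times)$ and each $a^1_\bullet \le a^2_\bullet$ the canonical map $r^{a^1_\bullet}_{a^2_\bullet}(\cF):\nu(a^1_\bullet)(\cF)\to\nu(a^2_\bullet)(\cF)$ is an equivalence.

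For this I would reproduce the proof of Theorem~\ref{thm: main}, deleting the first paragraph (the reduction through $u_n$, which is now vacuous since $\nu = \psi$) and keeping everything that follows. Namely: the question is local on stalks, so reduce to $f: X = \RR^m \times D^n \to D^n$ the projection; introduce closed balls $B(\frz)\subset\RR^m$, the functions $r_a(z) = \sum_{i\in[n]\setminus a} z_i^2$, the tubes $T_a(\frr_a)$, boundaries $S_a(\frr_a)$, and the compact manifold-with-corners $P(\frr)$ with faces $P(a_\bullet,\frr)$ indexed by partial flags; then the model Lemma in that proof — built from the elementary Thom-isotopy Lemmas~\ref{l: useful 1} and~\ref{l: useful 2}, which are already stated over $\RR$ — identifies $\nu(a_\bullet)(\cF)|_0 \simeq \Gamma(B(\frz)\times P(a_\bullet,\frr),\cF)$ compatibly with the restriction maps $\rho$. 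Finally one runs the non-characteristic propagation argument of Kashiwara--Schapira along the monotone family $P_t$, $t\in[1,2]$, with outward conormal Lagrangians $\Lambda_t$: the $f$-non-characteristic hypothesis forces any obstruction codirection to have nonzero $\RR^m$-component lying in the radial codirection $d|x|^2$ along the vertical boundary $S(\frz)\times P_t$, and then taking $\frr\to 0$ and $\frz\to 0$ (using curve selection, valid in the tame real setting) produces a curve of radial codirections in $\ol\Lambda_f|_{z=0}$ with base points tending to the origin, which cannot be isotropic, contradicting $f$-Thom at $0$. Since no step used holomorphicity, each goes through verbatim.

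I do not expect any genuine obstacle: the content of the theorem is exactly the content of Theorem~\ref{thm: main} minus the monodromy bookkeeping, and the subanalytic/o-minimal ``tame'' framework of Section~2 was set up over $\RR$ in the first place. The only point requiring a word of care — and the closest thing to an obstacle — is making sure the lax functoriality $r^{a^1_\bullet}_{a^2_\bullet}$ of Lemma~\ref{l: naive lax}, whose construction via the big diagrams \eqref{eq:big diag}--\eqref{eq:big X diag} was written with complex disks, is literally the same base-change construction when the disks are replaced by real intervals; but those diagrams only ever use that $D(a)$ is a manifold, $D^\times(a)\hookrightarrow D(a)$ is the open complement of the closed fiber $\{0\}$, and fiber products exist, so the construction and its coherence are unchanged. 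Hence the theorem follows by citing the proof of Theorem~\ref{thm: main} with the complex coordinates reinterpreted as real ones and the unwinding step omitted.
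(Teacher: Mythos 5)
Your proposal matches the paper's own proof: the paper likewise notes that the proof of Theorem~\ref{thm: main} first factors out the monodromy via the unwinding functor and then argues entirely with the naive nearby cycles and the local ball--tube--corner model, and that in the real setting one drops the monodromy step and repeats the remaining arguments verbatim with real coordinates. Your elaboration of why each ingredient (the lax functoriality of Lemma~\ref{l: naive lax}, the hypotheses of Section~\ref{s:hypo}, Lemmas~\ref{l: useful 1}--\ref{l: useful 2}, non-characteristic propagation, curve selection) is purely real-analytic is correct and is exactly what the paper is implicitly invoking.
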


\end{document}